\definecolor{Darkgreen}{rgb}{0,0.6,0}
\newtheorem{ex}{Example}
\newtheorem{pro}{Proposition}
\newtheorem{coro}{Corollary}
\def\r{\hat{\rho}}
\newcommand{\BPC}{\textbf{B\&P\&C\xspace}}
\title{
An extended version of a Branch-Price-and-Cut Procedure for the Discrete Ordered Median Problem}
\begin{document}

\author{\textbf{Samuel Deleplanque}\\
Ifsttar, COSYS, ESTAS, Universit\'e Lille Nord de France,\\
\textbf{Martine Labb\'e}\\
D\'epartament d'Informatique, Facult\'e des Sciences,\\ Universit\'e Libre de Bruxelles,\\
\textbf{Diego Ponce}\\
Instituto de Matem\'aticas de la Universidad de Sevilla (IMUS),
\\
\textbf{ Justo Puerto}\\
Instituto de Matem\'aticas de la Universidad de Sevilla (IMUS).
 \\}
\maketitle

\begin{abstract}
The Discrete Ordered Median Problem (DOMP) is formulated as a set partitioning problem using an exponential number of variables. Each variable corresponds to a set of demand points allocated to the same facility with the information of the sorting position of their corresponding costs. We develop a column generation approach to solve the continuous relaxation of this model. Then, we apply a branch-price-and-cut algorithm to solve to optimality small to moderate size of DOMP in competitive computational time.
\end{abstract}

\section{Introduction \label{section:1}}

Logistics is a new most active field in nowadays Operations Research and Location Analysis is among its most important building blocks. Motivated by the need of applying more flexible models in Logistics, in the last years, a new family of location models, namely  the Ordered Median location Problem has been proposed. An ordered median objective function computes ordered weighted averages of vectors (\cite{Nickel2005}) and when it is applied to location problems those vectors are distances or allocation costs from clients to service facilities. Ordered median location problems were first introduced in networks and continuous spaces by \cite{NP1999} and \cite{PF2000}, respectively. Later, they were extended to the discrete setting by  \cite{Nickel2001,Boland2006}. The Discrete Ordered Median Problem (DOMP) has been widely studied since the 90's and there is a number of different formulations,  solution approaches and applications available in the literature (\citet{Boland2006,Dominguezmarin2003, Marin2009, Marin2010, Nickel2001, NP1999, Nickel2005, PP2013, Puerto2008, Puerto2009, Puerto2014}).

Given a set of clients and a set of candidate locations and assuming that the allocation costs of clients to facilities are known, DOMP consists in choosing $p$ facility locations and assigning each client to a facility with smallest allocation cost in order to minimize the ordered weighted average of these costs. The ordered weighted average sorts the allocation costs in a non-decreasing sequence and then it performs the scalar product of this so-obtained sorted cost vector with a given vector of weights.

There are several valid formulations for DOMP that exploit specific features of the problem (see e.g. \cite{Boland2006,Marin2009,Labbe2017} and the references therein). In \cite{Labbe2017} a new formulation for DOMP has been proposed, based on a set packing approach, that is valid for general cost coefficient. This formulation  gives rise to rather tight integrality gaps and was shown to be reasonably efficient to solve medium size instances when embedded in a branch-and-cut (B\&C) scheme. In this paper we explore a different paradigm for solving DOMP based on an extended formulation using an exponential number of variables corresponding to a set partitioning model. Each variable represents a set of couples (client, position). These clients are served by the same facility and their position indicates the situation of this allocation cost in the sorted list of allocation costs in any feasible solution.  To handle the exponential number of variables we use a column generation approach that is embedded in a branch-price-and-cut (\BPC) algorithm. A recent similar approach can be seen in \cite{Hossein2016}. This scheme has never been applied to DOMP and it opens new avenues of research. Therefore, the contribution of this paper is to propose a new perspective in the resolution of DOMP based on formulations with an exponential number of variables and to develop an efficient \BPC \; algorithm to handle them.

This paper is organized as follows. After the introduction, Section \ref{section:2.2} introduces a new set partitioning formulation for DOMP. This formulation uses an exponential number of variables where each element of the partition is a set of clients together with their sorted positions that are assigned to the same server. This formulation is theoretically compared in Section \ref{section:2.3} with another valid formulation described in Section \ref{section:2.1} borrowed from \cite{Labbe2017}. Section \ref{section:2.4} describes the column generation algorithm that we have designed to overcome the large number of variables in the model. We prove that the pricing subproblem is solvable efficiently in polynomial time by using an \textit{ad hoc} dynamic programming algorithm. We devote our Section \ref{section:3} to determine the implementations details of our  \BPC \; algorithm.   We develop a GRASP heuristic, in Section \ref{section:3.1}, that is used both to generate a promising initial solution and a pool of variables to initialize the column generation routine.
We also develop a stabilization routine, based in \cite{Pessoa2010}, that reduces considerably the number of iterations of the column generation approach in Section \ref{section:3.2}. In addition, sections \ref{section:3.3} and \ref{section:3.4} are devoted to present two additional improvements, namely a pricer heuristic and a preprocessing. The next two subsections, \ref{section:3.5} and \ref{section:3.6}, present our branching strategies and some families of valid inequalities that will be added to the branch-and-price algorithm. The next section, namely Section \ref{section:4} is devoted to report on the final computational experiments of this paper. Here, we report on  the performance of the solution approach. Besides, we also compare the  performance of the \BPC\;  algorithm presented in this paper against the  compact formulation in Section \ref{section:2.1}. The paper ends with a section devoted to concluding remarks.

\section{Problem definition and formulations\label{section:2}}

Let $I$ be a set of $n$ points which at the same time represent clients and potential facility locations which are assumed to be uncapacitated; and let $c_{ij}$ denote the cost for serving client $i$'s demand from facility $j$.

\renewcommand{\labelenumi}{\arabic{enumi}.}

Given a set $J$ of $p$ open facilities, let $c_i(J)$ represents the cost for allocating client $i$ to the cheapest  facility in $J$ so that $c_i(J):=\displaystyle\min_{j \in J} c_{ij}$.

Now let us sort the costs $c_i(J)$, $i\in I$ by non-decreasing order of their values. The elements of the resulting vector of ordered costs are denoted by $c^{(k)}(J)$ and satisfy
$c^{(1)}(J)\leq \cdots \leq c^{(n)}(J).$

Given vector $\lambda=(\lambda^k)^n_{k=1}$ satisfying $\lambda^k\geq 0, k=1,\dots,n$, the objective function of DOMP, is defined as
\begin{equation}\label{fo:def}z(J):=\sum_{k=1}^n\lambda^kc^{(k)}(J).\end{equation}

Recall that this objective function provides a very general paradigm to encompass standard and new location models. For instance, if $\lambda^1=\dots=\lambda^n=1$ we obtain the median objective, if $\lambda^1=\lambda^2=\dots=\lambda^{n-1}=0,\lambda^n=1$ we obtain the center objective, if $\lambda^1=\lambda^2=\dots=\lambda^{n-1}=\alpha,\lambda^n=1$, where $\alpha= [0,1]$, we obtain a convex combination of median and center objectives (centdian), etc.

The $p$-facility Discrete Ordered Median Problem looks for the subset $J$ of $p$ facilities to open in order to minimize the ordered median function:
\begin{equation}\label{of:DOMPdefiniton} \tag{DOMP}\min_{J\subseteq I:|J|=p}z(J).\end{equation}

There are several available formulations of DOMP in the literature using different spaces of variables. Among them we mention those based on some combinations of the $p$-median and permutation polytopes (\citep{Boland2006}) or on   coverage approaches based on radius variables (\citep{Puerto2008}, \citep{Marin2009,Marin2010}).

\subsection{An explicit formulation for DOMP: The Weak Order Constraints\label{section:2.1}}

In  the following, we recall the Weak Order Constraints formulation, that we will refer to as $WOC$, introduced in \citet{Labbe2017}, that will be the starting point for the developments presented in this paper. This formulation uses two types of binary variables. Variables $y_j$ assume value 1 if facility $j$ is open (i.e. $j \in J$) and 0 otherwise. Variables  $x_{ij}^k$ are equal to 1 if client $i$ is allocated to facility $j$ and the corresponding cost occupies position $k$ in the allocation cost ranking (i.e. $c^{(k)}(J)=c_{ij}$). The choice of this formulation is motivated by its good performance in terms of integrality gap (see \citep{Labbe2017}). However, it requests important memory space since it needs $O(n^3)$ binary variables which may become prohibitive for moderate $n$.

Let  $R=(r_{ij}) \in \mathbb{N}^{n\times n}$ be a matrix such that $r_{ij}=\ell$ if $c_{ij}$ is the $\ell$-th element in the sorted list of the costs in $C=(c_{ij})$, where ties are broken arbitrarily. In other words, $r_{ij}$ is the position in the above list of the allocation cost $c_{ij}$ of the problem. For the sake of readability the reader is referred to Example \ref{ex:firstsolution} in Section \ref{section:2.4}. Thus, the formulation is

\begin{eqnarray}
(WOC):\;  \min&\displaystyle\sum_{i=1}^n\sum_{j=1}^n\sum_{k=1}^n\lambda^kc_{ij}x_{ij}^k\label{c3:ofdomp4}\\
\mbox{s.t.}&\displaystyle\sum_{j=1}^n\sum_{k=1}^nx_{ij}^k=1&i=1,\dots,n\label{eq:1fb}\\
&\displaystyle\sum_{i=1}^n\sum_{j=1}^nx_{ij}^k = 1&k=1,\dots,n\label{eq:2fb}\\
&\displaystyle\sum_{k=1}^nx_{ij}^k \leq y_j & {\small i,j=1,\dots,n} \quad \label{in:1fb}\\
&\displaystyle\sum_{j=1}^ny_{j} = p\label{eq:3fb}\\
&\hspace*{-4cm}\displaystyle \sum_i^n\sum_j^n\left(\sum_{i'=1}^n\sum_{\substack{j'=1:\\ r_{i'j'}\leq r_{ij}}}^nx_{i'j'}^k + \sum_{i'=1}^n\sum_{\substack{j'=1:\\r_{i'j'}\geq r_{ij}}}^nx_{i'j'}^{k-1}\right) \le n^2, & k=2,\cdots,n\label{in:3f1}\\
&x_{ij}^k \in \{0,1\}& i,j,k=1,\dots,n \quad \label{binary:xf1}\\
&y_j \in \{0,1\}&  j=1,\dots,n,\label{binary:yf1}
\end{eqnarray}

By means of (\ref{eq:1fb}) we ensure that each location is served by exactly one facility. In the same way, in each position there must be exactly one allocation cost (\ref{eq:2fb}). We know that a client can be allocated to a facility only if this facility is open, i.e. $x_{ij}^k\leq y_j$ for all $i,j,k$. Furthermore, each allocation cost of a client to a facility can be placed in at most one position. Hence, $x_{ij}^k\leq y_j$ can be strengthened yielding constraints (\ref{in:1fb}). The equality constraint (\ref{eq:3fb}) implies that there are exactly $p$ open facilities.

The constraints (\ref{in:3f1}), called \textit{weak order constraints}, ensure that if client $i$ allocated to facility $j$, occupies the $k$-th position in the client ranking then in $(k-1)$-th position there must be a more preferred allocation cost. This property is enforced by the coefficients of each variable in the inequality. In each constraint there are two different positions, $k$ and $k-1$, so that, by (\ref{eq:2fb}), only two variables must take value one and all the others will be equal to zero. If we do not take into account the variables assuming the value zero and we assume that the variables with value one for positions $k$ and $k-1$ correspond to allocation pairs in sorted  position $s$ and $t$, respectively, the inequality reduces to the following expression:
$$(n^2-(s-1))x_{i_sj_s}^k+tx_{i_tj_t}^{k-1}\le n^2,$$
which is valid if and only if $t<s$.

Finally, the variables are binary, see (\ref{binary:xf1}) and (\ref{binary:yf1}).

$WOC$ can be reinforced by adding some valid inequalities
\begin{equation}
\sum_{i'=1}^n\sum_{\substack{j'=1:\\ r_{i'j'}\leq r_{ij}}}^nx_{i'j'}^k + \sum_{i'=1}^n\sum_{\substack{j'=1:\\ r_{i'j'}\geq r_{ij}}}^nx_{i'j'}^{k-1} \le 1, \;  i,j=1,\cdots,n,\; k=2,\cdots,n. \label{in:3f1-des}
\end{equation}
Observe that constraints (\ref{in:3f1}) are the aggregation over $i,j$ of inequalities (\ref{in:3f1-des}). These inequalities are the so called \textit{strong order constraints}, see \cite{Labbe2017} for a detailed explanation.

\subsection{A set partitioning formulation\label{section:2.2}}

From a linear programming relaxation point of view the above formulation is not the strongest one but it provides a good  compromise between the number of required constraints and the quality of its linear relaxation bound, see  \cite{Labbe2017}. Further, it
allows to solve to optimality problems of moderate size. One of its drawbacks is the use of a cubic number of variables, which can be prohibitive for large $n$.  A second  important problem of most known formulations for DOMP is the high degree of symmetry in case of allocation costs ($C$) or weighted ordered vector ($\lambda$) with many ties.

The reasons above motivate the introduction of a new formulation based on a different rationale. We observe that a solution for DOMP is a partition of the clients together with their positions in the sorted vector of costs so that each  subset of clients in the partition is allocated to the same facility.

Let us consider sets  of couples $(i,k)$ where the first component refers to client $i$ and the second to position $k$, namely $S=\{(i,k): \text{ for some } i,k=1,\dots,n\}$.
Associated with each set $S$ and facility $j$, we define variables

\begin{eqnarray*}y_{S}^j&=&\left\{\begin{array}{cl}1&\text{if set $S$ is part of a feasible solution, i.e. } (i,k)\in S \text{ iff } x_{ij}^k=1\\0&\text{otherwise.}\end{array}\right.\end{eqnarray*}

We observe that in any feasible solution each client $i$ must occupy a unique sorted position $k$ and must be allocated to a unique facility $j$, thus the following relationship holds $x_{ij}^k=\sum_{S\ni(i,k)}y_S^j$, for all $i,j,k$.

Next, assuming that all clients in $S$  are allocated to facility $j$ and that the positions that appear  in the second entry of the couples $(i,k)$ of the set $S$ satisfy the sorting among their allocation costs, i.e. $c_{ij}\le c_{i'j}$ whenever $(i,k)$, $(i',k')\in S$ and $k < k'$, we can evaluate the cost $c_S^j$ induced by the set $S$ provided that its clients are assigned to facility $j$ in a feasible solution:

\begin{equation}c_S^j=\sum_{(i,k)\in S}\lambda^kc_{ij}.\label{eq:relation}\end{equation}
To simplify the presentation in the following we denote by $(i,\cdot)$  the couples whose first entry is $i$ regardless of the value of the second entry. Analogously, $(\cdot,k)$ denotes the couples whose second entry is $k$ regardless of the value of the first entry.

We give next a valid formulation for DOMP using the set of variables $y_S^j$. This will be our Master Problem ($MP$) in Section \ref{section:2.2}.

\begin{eqnarray}
\textbf{(MP) }\min&\displaystyle\sum_{j=1}^n\sum_Sc_S^jy_S^j&\label{of:ma}\\
s.t.&\displaystyle\sum_{j=1}^n\sum_{S\ni(i,\cdot)}y_S^j&=1,\forall\,i\label{eq:1ma}\\
&\displaystyle\sum_{j=1}^n\sum_{S\ni(\cdot,k)}y_S^j&=1,\forall\,k\label{eq:2ma}\\
&\displaystyle\sum_{S}y_S^j&\le1,\forall\,j\label{eq:3ma}\\
&\displaystyle\sum_{j=1}^n\sum_{S}y_S^j&\le p,\label{eq:4ma}\\
&\hspace*{-1cm}\displaystyle\sum_{i=1}^n\sum_{j=1}^n\left(\sum_{\substack{S\ni(i',k)\\:r_{i' j'}\le r_{ij}}}y_S^{ j'}+\sum_{\substack{S\ni(i',k-1)\\:r_{i' j'}\ge r_{ij}}}y_S^{ j'}\right)&\le n^2,k=2,\dots,n\label{eq:5ma}\\
&y_S^j&\in\{0,1\},\forall\,S,j,
\end{eqnarray}

The objective function (\ref{of:ma}) accounts for the sorted weighted cost of any feasible solution. Constraints (\ref{eq:1ma}) ensure that each client appears exactly once in a set $S$. Constraints (\ref{eq:2ma})  ensure that each position is taken exactly once by a client in a set $S$. Constraints (\ref{eq:3ma}) guarantees that each facility $j$ serves at most one set $S$ of clients. Inequality (\ref{eq:4ma})  states that at most $p$ facilities will be opened. By the following family of inequalities (\ref{eq:5ma})  we enforce the correct sorting of the costs in any feasible solution. Finally, the variables are binary.  We note in passing that this formulation is not a Dantzig-Wolfe reformulation of $WOC$ but a new formulation based on the properties of the problem. Indeed, the definition of a column $y_S^j$ includes conditions on the position of the clients in $S$. Hence partial order constraints are transfered to the pricing problem.

The above formulation can be strengthen by adding valid inequalities borrowed from $WOC$. Indeed, one can  translate valid inequalities (\ref{in:3f1-des}) in terms of the $y_S^j$ variables so that they can be used in the set partition formulation of DOMP. The translation of (\ref{in:3f1-des}) results in:
\begin{equation}
\displaystyle\sum_{\substack{S\ni(i',k)\\:r_{i' j'}\le r_{ij}}}y_S^{ j'}+\sum_{\substack{S\ni(i',k-1)\\:r_{i' j'}\ge r_{ij}}}y_S^{ j'}\le 1,\; i,j=1,\dots,n,k=2,\dots,n.\label{cuts}
\end{equation}

\subsection{Theoretical comparison of formulations \label{section:2.3}}
One can prove that the linear relaxation of $MP$, from now on $LRMP$, is tighter than that of $WOC$. Let $P_{MP}$ and $P_{WOC} $,   denote, respectively, the polyhedra defined by the feasible domains of $MP$ and $WOC$ relaxing the integrality constraints. Moreover, let $N$ be the dimension of the space of variables $y_S^j$ defined above and consider the following mapping
$$\begin{array}{rcl}
f:[0,1]^N&\longrightarrow& [0,1]^{n^3}\times[0,1]^n\\
(y_s^j)&\longmapsto&(x_{ij}^k,y_j)
\end{array}$$
defined by the following two equations
\begin{equation}
x_{ij}^k=\sum_{S\ni (i,k)}y_S^j\quad i,j,k=1,\dots,n\label{c3r:1yx}
\end{equation}
and
\begin{equation}
y_j=\sum_{S}y_S^j\quad j=1,\dots,n\label{c3r:2yy}.
\end{equation}
\begin{pro}
\label{c3-pro-inclusionMP-DOMP}
Let $p=(y_S^j)$ if $p\in P_{MP} $ then $f(p)\in P_{WOC}$.
\end{pro}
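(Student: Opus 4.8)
The plan is to establish the inclusion by a direct constraint-by-constraint verification: substitute the defining relations \eqref{c3r:1yx} and \eqref{c3r:2yy} into each (in)equality describing $P_{WOC}$ and exhibit it as a consequence of the corresponding $MP$ constraint evaluated at the point $p=(y_S^j)$. The one structural ingredient I would rely on repeatedly is that, by the very definition of an admissible column, each client occurs in at most one couple of a set $S$ and each position is occupied by at most one client of $S$. This is exactly what lets the nested sums collapse, e.g. $\sum_{k}\sum_{S\ni(i,k)}y_S^j=\sum_{S\ni(i,\cdot)}y_S^j$ and $\sum_{i}\sum_{S\ni(i,k)}y_S^j=\sum_{S\ni(\cdot,k)}y_S^j$.

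I would first dispose of the box constraints: nonnegativity of $x_{ij}^k$ and $y_j$ is immediate from $y_S^j\ge 0$, the upper bound $y_j=\sum_S y_S^j\le 1$ is precisely \eqref{eq:3ma}, and $x_{ij}^k\le 1$ then follows once the assignment equality is in hand. For \eqref{eq:1fb} I would write $\sum_{j}\sum_{k}x_{ij}^k=\sum_{j}\sum_{S\ni(i,\cdot)}y_S^j=1$, the last equality being \eqref{eq:1ma}; by the symmetric collapse, \eqref{eq:2fb} reduces to \eqref{eq:2ma}. For the coupling inequality \eqref{in:1fb} I would note that $\sum_{k}x_{ij}^k=\sum_{S\ni(i,\cdot)}y_S^j$ is a sum over a subfamily of the nonnegative terms that make up $y_j=\sum_{S}y_S^j$, hence is bounded above by $y_j$.

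The weak order constraints \eqref{in:3f1} require almost no work: plugging \eqref{c3r:1yx} into their left-hand side reproduces, term by term, the left-hand side of \eqref{eq:5ma}, so the two families coincide under the mapping $f$ and the bound $n^2$ is inherited verbatim. The step I expect to be the delicate one is the facility-cardinality constraint \eqref{eq:3fb}. Summing \eqref{c3r:2yy} over $j$ gives $\sum_j y_j=\sum_{j}\sum_{S}y_S^j$, and \eqref{eq:4ma} immediately yields the bound $\sum_j y_j\le p$. The hard part will be reconciling this $\le$ with the equality $=p$ demanded by $WOC$: this is the only constraint whose verification is not a mechanical substitution, and it is where the relationship between the total weight placed on the columns and the number $p$ of opened facilities has to be argued with care. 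Once that point is settled, every defining constraint of $P_{WOC}$ has been checked and $f(p)\in P_{WOC}$ follows.
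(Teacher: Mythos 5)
Your constraint-by-constraint substitution is exactly the paper's route, and your verifications of \eqref{eq:1fb}, \eqref{eq:2fb}, \eqref{in:1fb} and \eqref{in:3f1} are correct as far as they go --- indeed for \eqref{in:1fb} your observation that $\sum_{k}x_{ij}^k=\sum_{S\ni(i,\cdot)}y_S^j$ is a partial sum of the nonnegative terms making up $y_j=\sum_S y_S^j$ is the clean form of what the paper writes, and you rightly isolate the column-admissibility condition (at most one couple per client and per position in each $S$) that makes the double sums collapse. The problem is the one step you explicitly leave open: the cardinality equality \eqref{eq:3fb}. From \eqref{eq:4ma} together with \eqref{c3r:2yy} you only obtain $\sum_j y_j\le p$, and a point of $P_{MP}$ can perfectly well satisfy this strictly (columns need not be spread over $p$ facilities), in which case the literal image $f(p)$ violates \eqref{eq:3fb}. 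Announcing that this ``has to be argued with care'' without supplying the argument is a genuine gap, since it is the only non-mechanical step of the whole proof.

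The missing idea, which is how the paper closes it, is a short monotonicity (padding) argument: the variables $y_j$ carry null cost in the objective of $WOC$ and appear elsewhere only on the right-hand side of the inequalities \eqref{in:1fb} and in \eqref{eq:3fb}. One may therefore increase the $y_j$-components of the image (each bounded by $1$; since $p\le n$ the total can always be raised to exactly $p$) without violating \eqref{in:1fb} or any other constraint, yielding a point of $P_{WOC}$ whose $x$-part --- the part that determines the objective value and hence the bound comparison between the two relaxations --- is unchanged. Note this implicitly adjusts $f(p)$ rather than using $f$ exactly as defined by \eqref{c3r:2yy}, a slight abuse the paper also commits; but without spelling out this padding step your proof is incomplete at precisely the point you flagged.
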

\begin{proof}
Let us assume that $p\in P_{MP} $. We prove that $f(p)$ satisfies (\ref{eq:1fb})-(\ref{in:3f1}). To prove $(\ref{eq:1fb})$, observe that, according to the definition of $x_{ij}^k$ in (\ref{c3r:1yx}), $\sum_{S\ni (i,.)} y_S^j=\sum_{k=1}^n x_{ij}^k$. Therefore, substituting in (\ref{eq:1ma}) we get the desired result. Checking the validity of (\ref{eq:2fb}) is analogous.

Now, we prove (\ref{in:1fb}). Observe that by (\ref{c3r:1yx}) $\sum_{k=1}^n x_{ij}^k = \sum_{k=1}^n \sum_{S\ni (i,k)} y_S^j=\sum_{S\ni (i,\cdot)} y_S^j$ and then
$$ \sum_{S\ni (i,\cdot)} y_S^j \le  \sum_{k=1}^n \sum_{S} y_S^j\le 1.$$

This last inequality holds by (\ref{eq:3ma}) which proves (\ref{in:1fb}). To check (\ref{eq:3fb}) we replace (\ref{c3r:2yy})  on (\ref{eq:4ma}) to obtain $\sum_{j=1}^n y_j\le 1$. The equality follows because setting extra $y_j$ variables to 1 do not worsen the objective function since all $y_j$ variables have null cost. Finally, (\ref{eq:5ma}) follows analogously substituting (\ref{c3r:1yx}) in (\ref{in:3f1}).
\end{proof}

Hence, it is clear that the bound obtained by LRMP is at least as good as the bound provided by the linear relaxation of $WOC$. There are instances where the inclusion is strict as shown by the integrality gap results reported in Table \ref{ResultsLP20and30}.

Let $P_{SOC}$ be the polyhedron defined by the constraints (\ref{eq:1fb})-(\ref{eq:3fb}) and (\ref{in:3f1-des}) assuming the variables $(x,y)\in [0,1]^{n^3}\times [0,1]^n$. Observe that this is the polyhedron that results  from $P_{WOC}$ by replacing (\ref{in:3f1}) by (\ref{in:3f1-des}). Analogously, let $P_{SMP}$ be the  convex polyhedron defined by the constraints (\ref{eq:1ma})-(\ref{eq:4ma}) and (\ref{cuts}), that results from $P_{MP}$ replacing (\ref{eq:5ma}) by (\ref{cuts}).  We assume variables $y\in [0,1]^{N}$.
The following results relates the feasible solutions of the linear relaxations of $MP$ and $WOC$ whenever all the cuts coming from the \textit{strong order constraints} are added to both formulations.

\begin{coro}
Let $p=(y_S^j)$ if $p\in P_{SMP}$ then $f(p)\in P_{SOC}$.
\end{coro}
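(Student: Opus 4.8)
The plan is to reuse Proposition \ref{c3-pro-inclusionMP-DOMP} almost verbatim and to isolate the single genuinely new ingredient. The key observation is that $P_{SMP}$ and $P_{MP}$ share exactly the constraints (\ref{eq:1ma})--(\ref{eq:4ma}), and that in the proof of Proposition \ref{c3-pro-inclusionMP-DOMP} the constraints (\ref{eq:1fb})--(\ref{eq:3fb}) of $P_{WOC}$ were deduced using only (\ref{eq:1ma})--(\ref{eq:4ma}); the weak order constraints (\ref{in:3f1}) were obtained separately, as a last step, from (\ref{eq:5ma}). Hence I would first invoke exactly that portion of the earlier argument: since $p\in P_{SMP}$ satisfies (\ref{eq:1ma})--(\ref{eq:4ma}), its image $f(p)$ automatically satisfies (\ref{eq:1fb})--(\ref{eq:3fb}), with no modification needed.

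It then remains only to check that $f(p)$ satisfies the strong order constraints (\ref{in:3f1-des}). For this I would substitute the defining relation (\ref{c3r:1yx}), namely $x_{ij}^k=\sum_{S\ni(i,k)}y_S^j$, into the left-hand side of (\ref{in:3f1-des}). Replacing each $x_{i'j'}^{k}$ and each $x_{i'j'}^{k-1}$ by its expression in the $y_S^{j'}$ variables and interchanging the order of summation turns the first double sum into $\sum_{S\ni(i',k):\,r_{i'j'}\le r_{ij}}y_S^{j'}$ and the second into $\sum_{S\ni(i',k-1):\,r_{i'j'}\ge r_{ij}}y_S^{j'}$. The resulting inequality is, for each fixed triple $i,j,k$, precisely (\ref{cuts}). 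Consequently, because $p\in P_{SMP}$ satisfies (\ref{cuts}), the point $f(p)$ satisfies (\ref{in:3f1-des}), and the inclusion $f(p)\in P_{SOC}$ follows.

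I do not anticipate any real difficulty: the corollary is essentially the disaggregated counterpart of Proposition \ref{c3-pro-inclusionMP-DOMP}, and the substitution linking (\ref{cuts}) to (\ref{in:3f1-des}) is the exact analogue of the one linking (\ref{eq:5ma}) to (\ref{in:3f1}) used at the end of that proof. The only point deserving a moment of care is the bookkeeping: one must confirm that the strong order constraints are tied to the same fixed indices $i,j$ on both sides, so that the correspondence is term-by-term rather than aggregated over $i,j$, and that the index conditions $r_{i'j'}\le r_{ij}$ and $r_{i'j'}\ge r_{ij}$ are carried through $f$ verbatim. Once this is verified, the equality of the two left-hand sides is immediate and the proof concludes.
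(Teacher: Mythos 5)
Your proposal is correct and follows exactly the route the paper intends: the paper's entire proof of this corollary is the remark that it is ``similar to that of Proposition \ref{c3-pro-inclusionMP-DOMP}'', i.e., reuse the earlier argument for constraints (\ref{eq:1fb})--(\ref{eq:3fb}) and then substitute $x_{ij}^k=\sum_{S\ni(i,k)}y_S^j$ to pass from (\ref{cuts}) to (\ref{in:3f1-des}) index-by-index, which is precisely what you do. Your added care about the fixed indices $i,j,k$ being carried through term-by-term (rather than aggregated as in (\ref{in:3f1})) is the right observation and makes your write-up slightly more explicit than the paper's one-line proof.
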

The proof is similar to that of Proposition \ref{c3-pro-inclusionMP-DOMP}.

\subsection{Column generation to solve LRMP}\label{section:2.4}

Due to the fact that $MP$ can have a number of variables too large to be handled directly, in this section we describe a column generation approach to solve it.

We begin by obtaining the dual of LRMP. In order to do that let ($\alpha, \beta, \gamma,\delta,\epsilon$) be the dual variables associated, respectively, to constraints (\ref{eq:1ma}), (\ref{eq:2ma}), (\ref{eq:3ma}), (\ref{eq:4ma}) and (\ref{eq:5ma}). Then, DP, the dual problem of LRMP is

\begin{align}
\textbf{(DP)}\max&\displaystyle\sum_{i=1}^n\alpha_i+\sum_{k=1}^n\beta_k-\sum_{j=1}^n\gamma_j-p\delta-\sum_{k=2}^nn^2\epsilon_k&&\\
s.t.&\displaystyle\sum_{\substack{i=1\\:(i,\cdot)\in S}}^n\alpha_i+\sum_{\substack{k=1\\:(\cdot,k)\in S}}^n\beta_k-\gamma_j-\delta\nonumber\\
&\displaystyle-\sum_{k=2}^n\sum_{i'=1}^n\sum_{ j'=1}^n\left(\sum_{\substack{(i,k)\in S\\:r_{i' j'}\ge r_{ij}}}\epsilon_k+\sum_{\substack{(i,k-1)\in S\\:r_{i' j'}\le r_{ij}}}\epsilon_{k}\right)\le c_S^{j},\quad \forall\,j,S\\
&\gamma_j\ge0,\quad  \forall\,j\nonumber\\
&\delta\ge0,\quad \nonumber\\
&\epsilon_k\ge0\quad \forall\,k.\nonumber
\end{align}

In order to apply the column generation procedure let us assume that we are given a set of columns that defines a  restricted  linear relaxation of the Master Problem, from now on $ReLRMP$. This problem is solved to optimality and we get its dual optimal variables ($\alpha^*, \beta^*, \gamma^*,\delta^*,\epsilon^*$). See Example \ref{ex:firstsolution}. The reduced cost, $\overline c_S^j$, of the column $y_S^j$, namely $\overline c_S^j=c_S^j-z_S^j$ is given as:
$$\overline c_S^j=c_S^j+\gamma_j^*+\delta^*+\sum_{k=2}^n\sum_{i'=1}^n\sum_{ j'=1}^n\left(\sum_{\substack{(i,k)\in S\\:r_{i' j'}\ge r_{ij}}}\epsilon_k^*+\sum_{\substack{(i,k-1)\in S\\:r_{i' j'}\le r_{ij}}}\epsilon_{k}^*\right)-\sum_{\substack{i=1\\:(i,\cdot)\in S}}^n\alpha_i^*-\sum_{\substack{k=1\\:(\cdot,k)\in S}}^n\beta_k^*.$$

If $\overline c_S^j\ge0$ for all $S,j$ the current solution of ReLRMP is also optimal for the LRMP and the column generation procedure is finished.

Otherwise, one has identified one (some) new column(s) to be added to the current reduced master problem to proceed further. In each iteration, the ReLRMP and its reduced costs provide lower and upper bounds  for the LRMP. Indeed it holds (\citet{Desrosiers2005})

\begin{eqnarray}
z_{ReLRMP}+ p\cdot\min_{j,S}\overline c_S^j\le z_{LRMP}\le z_{ReLRMP},\label{lb1}\\
z_{ReLRMP}+ \sum_{j=1}^n\min_{S}\overline c_S^j\le z_{LRMP}\le z_{ReLRMP}. \label{lb2}
\end{eqnarray}
where $z_{ReLRMP}$ and $z_{LRMP}$ denote the optimal value of $ReLRMP$ and $LRMP$ respectively.

\begin{ex}\label{ex:firstsolution}

Consider the following cost matrix:
$$C=\left(\begin{array}{ccc}1&3&6\\3&1&8\\6&8&1\end{array}\right)$$
and the vector $\lambda=(4,2,1)$ The precedence matrix is the following
$$R=\left(\begin{array}{ccc}1&4&6\\5&2&8\\7&9&3\end{array}\right).$$
For $n=3$, there are 33 different sets of couples $(i,k)$.
\begin{eqnarray*}
\begin{array}{l}
S_1=\{(1,1)\}\\
S_2=\{(1,2)\}\\
S_3=\{(1,3)\}\\
S_4=\{(2,1)\}\\
S_5=\{(2,2)\}\\
S_6=\{(2,3)\}\\
S_7=\{(3,1)\}\\
S_8=\{(3,2)\}\\
S_9=\{(3,3)\}\\
S_{10}=\{(1,1),(2,2)\}\\
S_{11}=\{(1,1),(2,3)\}
\end{array}
&
\begin{array}{l}
S_{12}=\{(1,1),(3,2)\}\\
S_{13}=\{(1,1),(3,3)\}\\
S_{14}=\{(1,2),(2,1)\}\\
S_{15}=\{(1,2),(2,3)\}\\
S_{16}=\{(1,2),(3,1)\}\\
S_{17}=\{(1,2),(3,3)\}\\
S_{18}=\{(1,3),(2,1)\}\\
S_{19}=\{(1,3),(2,2)\}\\
S_{20}=\{(1,3),(3,1)\}\\
S_{21}=\{(1,3),(3,2)\}\\
S_{22}=\{(2,1),(3,2)\}
\end{array}
&
\begin{array}{l}
S_{23}=\{(2,1),(3,3)\}\\
S_{24}=\{(2,2),(3,1)\}\\
S_{25}=\{(2,2),(3,3)\}\\
S_{26}=\{(2,3),(3,1)\}\\
S_{27}=\{(2,3),(3,2)\}\\
S_{28}=\{(1,1),(2,2),(3,3)\}\\
S_{29}=\{(1,1),(2,3),(3,2)\}\\
S_{30}=\{(1,2),(2,1),(3,3)\}\\
S_{31}=\{(1,2),(2,3),(3,1)\}\\
S_{32}=\{(1,3),(2,1),(3,2)\}\\
S_{33}=\{(1,3),(2,1),(3,2)\}.\\
\end{array}
\end{eqnarray*}
We consider as initial pool of columns the variables $y_{18}^1$ and $y_8^3$. With this set of variables, the ReLRMP is
$$
\begin{array}{rrrll}
\textbf{(ReLRMP)}\min&+2y_{5}^2&+10y_{13}^1\\
s.t.&&+y_{13}^1&\ge1&i=1\\
&+y_{5}^2&&\ge1&i=2\\
&&+y_{13}^1&\ge1&i=3\\
&&+y_{13}^1&\ge1&k=1\\
&+y_{5}^2&&\ge1&k=2\\
&&+y_{13}^1&\ge1&k=3\\
&&-y_{13}^1&\ge-1&j=1\\
&-y_5^2&&\ge-1&j=2\\
&&&\ge-1&j=3\\
&-y_5^2&-y_{13}^1&\ge-2&\\
&-8y_5^2&-y_{13}^1&\ge-9&k=2\\
&-2y_5^2&-3y_{13}^1&\ge-9&k=3\\
&&y&\ge0&
\end{array}
$$
Actually, we are interested in its dual problem:
$${\scriptsize
\begin{array}{rrrrrrrrrrrrrll}
\textbf{(DP)}\max&+\alpha_1&+\alpha_2&+\alpha_3&+\beta_1&+\beta_2&+\beta_3&-\gamma_1&-\gamma_2&-\gamma_3&-2\delta&-9\epsilon_2&-9\epsilon_3\\
s.t.&&+\alpha_2&&&+\beta_2&&&-\gamma_2&&-\delta&-8\epsilon_2&-2\epsilon_3&\le2&(y_5^2)\\
&+\alpha_1&&+\alpha_3&+\beta_1&&+\beta_3&-\gamma_1&&&-\delta&-\epsilon_2&-3\epsilon_3&\le10&(y_{13}^1)\\
\end{array}
}
$$
$$\alpha,\beta,\gamma,\delta,\epsilon\ge0$$
Solving (DP)  the solution is $\alpha_2=2,\beta_3=10$ and the value of the objective function is $f=12$.
\end{ex}

\subsection{Solving the pricing subproblem\label{section:2.5}}

Although any column $y_S^j$ with negative reduced cost may be added to ReLRMP, we will follow a strategy that identifies the most negative reduced cost for each facility $j$. This approach may give rise to several candidate columns (multiple pricing, see \citet{Chvatal1983}), which is advantageous for this procedure.

In order to do that, we solve for  each facility $j$ a subproblem to find the column with minimum reduced cost associated with a feasible set $S$, namely a solution that satisfies that there is at most one pair $(i,\cdot)$ for each client $i$ and one pair $(\cdot,k)$ for each position $k$. Furthermore, the set $S$ must enjoy that the allocation costs of its couples are ranked accordingly. We solve this problem by the following dynamic programming algorithm. The reader may gain some intuition interpreting the algorithm as a shortest path in a graph built upon the matrix $D_j$ defined in (\ref{matrizdj}).

Let $d_{ij}^k$ be the contribution of the pair $(i,k)$ to the reduced cost of any column $y_S^j$ such that $(i,k)\in S$. Depending on the values of $k$,  $d_{ij}^k$ is given by
$$d_{ij}^k=\left\{\begin{array}{ll}\displaystyle\lambda^kc_{ij}+\sum_{i'=1}^n\sum_{\substack{ j'=1\\:r_{i' j'}\le r_{ij}}}\epsilon_{k+1}-\alpha_i-\beta_k&\text{if }k=1,\\
\displaystyle\lambda^kc_{ij}+\sum_{i'=1}^n\sum_{\substack{ j'=1\\:r_{i' j'}\ge r_{ij}}}^n\epsilon_k+\sum_{i'=1}^n\sum_{\substack{ j'=1\\:r_{i' j'}\le r_{ij}}}^n\epsilon_{k+1}-\alpha_i-\beta_k&\text{if }k=2,\dots,n-1,\\
\displaystyle\lambda^kc_{ij}+\sum_{i'=1}^n\sum_{\substack{ j'=1\\:r_{i' j'}\ge r_{ij}}}^n\epsilon_k-\alpha_i-\beta_k,&\text{if }k=n.\\\end{array}\right.$$

Now for each facility $j$, we define the matrix $D_j$, namely
\begin{equation}\label{matrizdj}D_j=\left(\begin{array}{c c c c}
d_{i_1j}^{1}&d_{i_1j}^{2}&\cdots&d_{i_1j}^{n}\\
d_{i_2j}^{1}&&&\\
\vdots&&\ddots&\\
d_{i_nj}^{1}&&&d_{i_nj}^{n}
\end{array}\right)
\end{equation}
where $i_1,i_2,\dots,i_n$ is a permutation of the indices $i=1,\dots,n$ which ensures $c_{i_1j}\le c_{i_2j}\le\cdots\le c_{i_nj}$.
\begin{ex}[continues=ex:firstsolution]
Next, we show the procedure that computes the elements $d_{ij}^{k}$ for all $i,k=1,\ldots,n$ of the matrix $D_1$. (\textbf{j=1})
\begin{eqnarray*}
&&d_{11}^1=\lambda^1c_{11}+r_{11}\epsilon_{2}-\alpha_1-\beta_1=4\\
&&d_{11}^2=\lambda^2c_{11}+(n^2-r_{11}+1)\epsilon_{2}+r_{11}\epsilon_{3}-\alpha_1-\beta_2=2\\
&&d_{11}^3=\lambda^3c_{11}++(n^2-r_{11}+1)\epsilon_{3}-\alpha_1-\beta_3=-9\\
&&d_{21}^1=\lambda^1c_{21}+r_{21}\epsilon_{2}-\alpha_2-\beta_1=10\\
&&d_{21}^2=\lambda^2c_{21}+(n^2-r_{21}+1)\epsilon_{2}+r_{21}\epsilon_{3}-\alpha_2-\beta_2=4\\
&&d_{21}^3=\lambda^3c_{21}++(n^2-r_{21}+1)\epsilon_{3}-\alpha_2-\beta_3=-9\\
&&d_{31}^1=\lambda^1c_{31}+r_{31}\epsilon_{2}-\alpha_3-\beta_1=24\\
&&d_{31}^2=\lambda^2c_{31}+(n^2-r_{31}+1)\epsilon_{2}+r_{21}\epsilon_{3}-\alpha_3-\beta_2=12\\
&&d_{31}^3=\lambda^3c_{31}++(n^2-r_{31}+1)\epsilon_{3}-\alpha_3-\beta_3=-4
\end{eqnarray*}
Since $r_{11}<r_{21}<r_{31}$ the  valid permutation is $(1,2,3)$. This implies that

$$D_1=\left(\begin{array}{rrr}4&2&-9\\10&4&-9\\24&12&-4\end{array}\right)\begin{array}{l}i=1\\i=2\\i=3\end{array}
$$
\end{ex}
\bigskip
We now present a dynamic programming algorithm to obtain the minimum reduced cost $\min_S\overline c_S^j$ for each $j=1,\ldots,n$.

For each couple $(i_l,k)$, we use two functions $g^j(i_l,k)$ and $S^j(i_l,k)$ representing the minimum reduced cost and the corresponding set of couples of the smaller pricing problem limited to the $l$ first rows and $k$ first columns respectively.

Our recursive procedure computes $g^j(i_l,k)$ and $S^j(i_l,k)$ for increasing values of $l$ and $k$ so that, at the end, $g^j({i_n},n)+\delta+\gamma_j=\min\limits_S\overline c_{S_j}^j$ and $S^j(i_n,n)=\text{arg}\,\min\limits_Sc_{S_j}$.

Further, the procedure exploits the following feasibility conditions on S:
\begin{enumerate}[(i)]
\item at most one couple per row and column belong to $S$.
\item if $(i_{\hat l},\hat k)$ and $(i_{\tilde l},\tilde k)\in S$ and $\hat k < \tilde k$ then $r_{i_{\hat l}j} < r_{i_{\tilde l}j}$.
\end{enumerate}

{\bf{Algorithm Pricing Subproblem}}

\begin{itemize}
\item \textbf{Step 0}

Set $g^j(i_1,1)=\min\{0,d_{i_1j}^1\}$

If $g^j(i_1,1)=d_{i_1j}^1<0$ , set $S^j(i_1,1)=\{(i_1,1)\}$. Otherwise set $S^j(i_1,1)=\emptyset$.

\item \textbf{Step 1}. For $k=2,\dots,n$.

Set $g^j(i_1,k)=\min\{d_{i_1j}^k,g^j(i_1,k-1)\}$

If $g^j(i_1,k)=g^j(i_1,k-1)$ , set $S^j(i_1,k)=S^j(i_1,k-1)$. Otherwise set $S^j(i_1,k)=\{(i_1,k)\}$.

\item \textbf{Step 2}. For $l=2,\dots,n$.

Set $g^j(i_l,1)=\min\{d_{i_lj}^1,g^j(i_{l-1},1)\}$

If $g^j(i_l,1)=g^j(i_{l-1},1)$ , set $S^j(i_l,1)=S^j(i_{l-1},k)$. Otherwise set $S^j(i_l,1)=\{(i_l,1)\}$.

\item \textbf{Step 3}. For $k,l=2,\dots,n$.

Set $g^j(i_l,k)=\min\{g^j(i_{l-1},k-1)+d_{i_lj}^k,g^j(i_{l-1},k-1),g^j(i_{l},k-1),g^j(i_{l-1},k)\}$

If $g^j(i_l,k)=g^j(i_{l-1},k-1)$ , set $S^j(i_l,k)=S^j(i_{l-1},k-1)$.

Else,  if $g^j(i_l,k)=g^j(i_{l},k-1)$ , set $S^j(i_l,k)=S^j(i_{l},k-1)$.

Else,  if $g^j(i_l,k)=g^j(i_{l-1},k)$ , set $S^j(i_l,k)=S^j(i_{l-1},k)$.

Otherwise set $S^j(i_l,k)=S^j(i_{l-1},k-1)\cup\{(i_l,k)\}$.
\end{itemize}

Obviously, if this $g^j(i_n,n)+\delta+\gamma_j$ is negative the variable $y_{S^j(i_n,n)}^j$ is a good candidate to be chosen in the next iteration of the column generation scheme.

If we solve this problem for all $j$, we get  $\overline c_{R}^j=\displaystyle \min_S\overline c_{S}^j$ and if $\overline  c_{R}^j<0$, we can activate (at least) $y_{R}^j$. Next, we solve a new reduced master problem ReLRMP with this (these) new activated variable(s).
\begin{ex}[continues=ex:firstsolution]
We show the computation of the $g^j(i_n,n)$ and  $S^j(i_n,n)$ for $j=1$.

$g^1(i_1,1)=\min\{0,4\}=0,S^1(i_1,1)=\emptyset$.

$g^1(i_1,2)=\min\{2,0\}=0,S^1(i_1,2)=\emptyset$.

$g^1(i_1,3)=\min\{-9,0\}=-9,S^1(i_1,3)=\{(1,3)\}$.

$g^1(i_2,1)=\min\{10,0\}=0,S^1(i_2,1)=\emptyset$.

$g^1(i_3,1)=\min\{24,0\}=0,S^1(i_3,1)=\emptyset$.

$g^1(i_2,2)=\min\{0+4,0,0,0\},S^1(i_2,2)=\emptyset$.

$g^1(i_3,2)=\min\{0+12,0,0,0\},S^1(i_3,2)=\emptyset$.

$g^1(i_2,3)=\min\{0-9,0,-9,0\},S^1(i_2,3)=\{(1,3)\}$.

$g^1(i_3,3)=\min\{0-4,0,-9,0\},S^1(i_3,3)=\{(1,3)\}$.

We have obtained  $g^1(i_3,3)$ and $S^1(i_3,3)=S_3$ being the potential set to be used, if the reduced cost is negative. Next, the corresponding reduced cost $\overline c_{3}^1=g^1(i_3,3)+\delta+\gamma_1=-9+0+0=-9<0$. Hence, we active variable $y_{3}^1$.

Next, the process continues with the following facilities, i.e. $j=2,3$. In this example the  optimal solution can be certified after four complete iterations of the above process.

The following table shows the objective function values and the negative reduced costs per facility obtained in each iteration.
\begin{center}
\begin{tabular}{r|c|rrr}
&&\multicolumn{3}{c}{$\displaystyle\min_Sc_S^j$}\\
&f&j=1&j=2&j=3\\
\hline
Iteration 0&12.00&-9.00&-11.00&-9.00\\
Iteration 1&12.00&-5.00&-4.00&-3.00\\
Iteration 2&12.00&-3.00&-3.00&-0.29\\
Iteration 3&9.00&0.00&0.00&0.00\\
\end{tabular}
\end{center}

\end{ex}

\subsection{Dealing with infeasibility}
One important issue when implementing a column generation procedure to solve a linear optimization problem is how to deal with infeasibility. This is specially crucial if the procedure is used within a branch-and-bound scheme to solve the linear relaxation of the problem in every node. In order to handle it, we resort to the so called Farkas pricing.

According with Farkas' Lemma, a reduced master problem is infeasible if and only if its associated dual problem is unbounded. Thus, to recover feasibility in the ReLRMP we have to revoke the certificate of unboundedness in the dual problem what can be done by adding constraints to it. Since we are only interested in recovering feasibility in ReLRMP, one can proceed in the same way that the usual pricing, but with null coefficients in the objective function of the primal. In this way, the Farkas dual problem is
$$
\begin{array}{rrll}
\max&\displaystyle\sum_{i=1}^n\alpha_i+\sum_{k=1}^n\beta_k-\sum_{j=1}^n\gamma_j-p\delta-\sum_{k=2}^nn^2\epsilon_k&&\\
s.t.&\displaystyle\sum_{\substack{i=1\\:(i,\cdot)\in S}}^n\alpha_i+\sum_{\substack{k=1\\:(\cdot,k)\in S}}^n\beta_k-\gamma_j-\delta\\
&\displaystyle-\sum_{k=2}^n\sum_{i'=1}^n\sum_{ j'=1}^n\left(\sum_{\substack{(i,k)\in S\\:r_{i' j'}\ge r_{ij}}}\epsilon_k+\sum_{\substack{(i,k-1)\in S\\:r_{i' j'}\le r_{ij}}}\epsilon_{k}\right)&\le 0&\forall\,j,S\\
&\gamma_j&\ge0&\forall\,j\\
&\delta&\ge0&\\
&\epsilon_k&\ge0&\forall\,k.\\
\end{array}
$$
We proceed to identify new variables that make the reduced master problem feasible using the dynamic programming approach replacing $c_S^j$ by zeros.

Farkas pricing is an important element in our approach because it allows to start the column generation algorithm with an empty pool of columns, although this is not advisable. Furthermore, Farkas pricing will be crucial in the branching phase to recover feasibility (whenever possible) in those nodes of the branching tree where it is lost after fixing variables.

\section{A branch-price-and-cut implementation\label{section:3}}

In this section, we precise several components of the implementation of our set partitioning formulation based on a column generation approach. \BPC\; is a branch-and-cut scheme that solves the linear relaxation of each node of the branching tree with the column generation algorithm previously described and may apply cuts to improve the obtained lower bound. (The reader is referred to \cite{Hossein2016} for another recent implementation of a \BPC.)

To calibrate the best choice of the different parameters used in our \BPC, we have performed, in all test in this section, a preliminary computational study based on  a set of 60 instances with sizes $n=20,30$ and with a time limit of 1800 sec.  Those are the smallest instances that we will eventually use in Section \ref{section:4}.

\subsection{Upper bound for the Master Problem: A GRASP heuristic and an initialization stage}\label{section:3.1}
We now present a heuristic algorithm to generate a feasible solution for $MP$. This feasible solution will provide a promising pool of initial columns as well as a good upper bound.

GRASP (\citet{Feo1989}, \citet{Feo1995}) is a well-known heuristic technique that usually exhibits good performance in short computing time. In our case, it consists in  a multistart greedy algorithm to construct a set of $p$ facilities from a randomly generated set of facilities with smaller cardinality. Following \citet{Puerto2014} we have chosen,  in a greedy manner, an initial set of $\lfloor p/2\rfloor$facilities. Next, we improve this initial solution by performing a fixed number of iterations of a local search procedure.

The greedy algorithm adds iteratively a new facility to the current set of open facilities, choosing the one with the maximum improvement of the objective value. The local search consists in an interchange heuristic between open and closed facilities.
The pseudocode of the GRASP used to solve the problem is described in Algorithm 1.

\begin{algorithm}[H]
\begin{algorithmic}[1]

\STATE Input($n,p,C,\lambda, n_1, n_2,q$);
\FOR {$n_1$ replications}
\STATE PartialSolution $\leftarrow$  ConstructRandomizedPartialSolution($q$);
\STATE Solution $\leftarrow$ ConstructGreedySolution(PartialSolution);
\FOR {$n_2$ iterations}
\STATE Solution $\leftarrow$ LocalSearch(Solution);
\STATE BestSolution $\leftarrow$ UpdateSolution(Solution, BestSolution);
\ENDFOR
\ENDFOR
\end{algorithmic}
\caption{GRASP for DOMP.\label{c3:al GRASP}}
\end{algorithm}

First of all, we would like to point out the remarkable behavior of the GRASP heuristic for this problem. In order to illustrate the appropriateness of our heuristic we have solved to optimality a number of instances of the problem   (using the MIP formulation) to be compared with those given by our GRASP. In all instances, up to a size of $n=100$, the solution provided by GRASP is always as good as the one obtained by the any of our MIP formulations with a CPU time limit of 7200 seconds, see Section \ref{section:4}.

Moreover, it is not only advisable to use the GRASP heuristic because it provides a very good upper bound thus helping the exploration of the searching tree by pruning many branches of the branch-and-bound tree, but in addition, the construction phase of the heuristic also provides a very promising pool of initial columns for the \BPC, in combination with the technique described in the following.

Since we are  solving the linear relaxation of our master problem, $LRMP$, without generating its entire set of variables, using the primal simplex algorithm, the goal of the initialization phase is to find an initial set of columns that allows solving the $MP$ by performing a small number of iterations in the column generation routine.
We create variables using a modification of the local search routine of the GRASP algorithm. Every time that we find a promising feasible solution in the heuristic, we create the variables that define that solution (CreateSetVariables(J)). Algorithm \ref{GRASP_initial} presents the pseudocode of this process.

Function CreateSetVariables(J) determines the costs involved in the solution, i.e. the minimum for each client among the open facilities. Then those costs are ordered to determine the position of each client. Once we know the couples $(i,k)$ assigned for each open facility, the corresponding variables are added to the pool.

\begin{ex}[continues=ex:firstsolution]
We illustrate the use of the function CreateSetVariables(J) with the following set $J=\{1,3\}$ (open facilities). The allocation costs for this set $J$ of open facilities are $c_{11}=1,c_{21}=3,c_{33}=1$. According to $R$, the ranks of these costs are $r_{11}=1<r_{33}=3<r_{21}=5$.  Thus, we get the couples $(1,1), (3,2)$ and $(2,3)$. This means that client $1$ goes to facility $1$ in position $1$, client $3$ goes to facility $3$ in position $2$ and client $2$ goes to facility $1$ in position $3$. Therefore, the variables $y_{\{(1,1),(2,3)\}}^1$ and $y_{\{(3,2)\}}^3$ are added to the pool.

\end{ex}

\begin{algorithm}[H]
\begin{algorithmic}[1]

\STATE Input($|J|=p$);
\STATE $\bar z= z(J)$; CreateSetVariables(J);
\FOR {$n_2$ iterations,$j_1\in J$,$j_2\in \bar J$}
\IF {$z((J\setminus\{j_1\})\cup \{j_2\})<\bar z$}
\STATE $\bar z = z((J\setminus\{j_1\})\cup \{j_2\})$; $J=(J\setminus\{j_1\})\cup \{j_2\}$;  CreateSetVariables(J);
\ENDIF
\ENDFOR
\end{algorithmic}
\caption{Initial columns.\label{GRASP_initial}}
\end{algorithm}

In order to test the helpfulness of GRASP in solving problems instances,  Table \ref{c3:tableGRASP} reports results of the 60 instances of sizes $n=20,30$ enabling or not the use of the GRASP. It shows average results of CPU time (Time(s)),   gap at termination, i.e. $100(z_{UB}-z_{LB})/z_{UB}$ (GAP(\%)), and number of unsolved problems   (in parentheses), number of nodes (\#nodes) and number of variables ($|Vars|$).

{\centering
\begin{table}[H]
\begin{center}
{\small
\begin{tabular}{rrrrr}
 \toprule
\textbf{GRASP}&\textbf{Time(s)}&{\textbf{GAP(\%)}}&\textbf{\#nodes}&\textbf{$|Vars|$}\\
 \midrule
Disabled&1350.47& -- \; (40)&33&9710\\
Enabled&1200.03&2.33(35)&19&7167\\
 \bottomrule
\end{tabular}
  \caption{CPU-Time, Number of nodes and Number of variables with and without GRASP heuristic for $n=20,30$.}
  \label{c3:tableGRASP}
}
\end{center}
\end{table}}

According with Table \ref{c3:tableGRASP}  it is  clearly advisable to use the upper bound provided by the GRASP heuristic: it reduces the number of nodes, thus improving the size of the branch-and-bound tree.

In Table \ref{t3:tableGRASP}, using the same notation that in Table \ref{c3:tableGRASP}, it is reported Time(s), $\#nodes$ and $|Vars|$ of all solved instances with sizes $n=20,30$. As one can observe from this table enabling the use of GRASP reduces the CPU time and number of nodes of the B\&B tree and at the same time reduces the overall  number of variables required by the \BPC. In addition, we would like to remark that by using the GRASP heuristic, \BPC\; is able to solve 5 more instances. Moreover, for those instances for which \BPC\; does not  certify optimality, GRASP provides an upper bound that leads to an average gap of 2.33 \%. Here, we also would like to point out that without the use of GRASP, in many cases, no feasible solutions are found within the time limit and thus, no \% gap (``--'') can be reported.

{\centering
\begin{table}[H]
\begin{center}
{\small
\begin{tabular}{cccc}
 \toprule
\textbf{GRASP}&\textbf{Time(s)}&\textbf{\#nodes}&\textbf{$|Vars|$}\\
 \midrule
Disabled&450.80&56&7664\\
Enabled&216.29&38&4500\\
 \bottomrule
\end{tabular}
  \caption{CPU-Time, Number of nodes and Number of variables  with and without GRASP heuristic for $n=20,30$. Summary of solved instances}
  \label{t3:tableGRASP}
}
\end{center}
\end{table}}

From our results, we have obtained that using GRASP heuristic one gets, on average, 4.91\% of the final number of  variables applying Algorithm \ref{GRASP_initial}.  The combination of the incumbent solution (given by GRASP) and that initial pool of variables leads to solve the considered instances faster, requiring less number of  nodes and variables to certify optimality.

Figure \ref{performanceProfilesGRASP} reports the performance profile of  GAP versus number of solved instances within a time limit of 1800 seconds, for the 60 instances with sizes $n=20,30$. The blue line reports results using GRASP and the orange one without it. It is interesting to point out that when GRASP is enabled the \BPC\;  is able to optimally solve 25 instances and the GAP of the remaining never goes beyond 10.72\%. On the other hand, if GRASP is disabled then \BPC\; only solves 20 instances but in addition, only for 4 more instances it is capable to obtain a feasible solution whereas in the remaining 36 instances the gap is greater than 100\% (no feasible solution is found).

\begin{figure}[!ht] \centering
	\begin{tikzpicture}[scale=1.0,font=\footnotesize]
	\begin{axis}[axis x line=bottom,  axis y line=left,
	xlabel=GAP,
	ylabel=Instances,
	legend style={legend pos=south east,font=\scriptsize}]

	\addplot[orange,semithick,dash pattern=on 4pt off 2pt] plot coordinates {		
(	0	,	1	)
(	0	,	2	)
(	0	,	3	)
(	0	,	4	)
(	0	,	5	)
(	0	,	6	)
(	0	,	7	)
(	0	,	8	)
(	0	,	9	)
(	0	,	10	)
(	0	,	11	)
(	0	,	12	)
(	0	,	13	)
(	0	,	14	)
(	0	,	15	)
(	0	,	16	)
(	0	,	17	)
(	0	,	18	)
(	0	,	19	)
(	0	,	20	)
(	1.27951	,	21	)
(	1.95196	,	22	)
(	6.35251	,	23	)
(	9.51121	,	24	)	
	};
	\addlegendentry{\textbf{Disabled}}

	\addplot[blue,semithick,dash pattern=on 4pt off 2pt] plot coordinates {			
(	0	,	1	)
(	0	,	2	)
(	0	,	3	)
(	0	,	4	)
(	0	,	5	)
(	0	,	6	)
(	0	,	7	)
(	0	,	8	)
(	0	,	9	)
(	0	,	10	)
(	0	,	11	)
(	0	,	12	)
(	0	,	13	)
(	0	,	14	)
(	0	,	15	)
(	0	,	16	)
(	0	,	17	)
(	0	,	18	)
(	0	,	19	)
(	0	,	20	)
(	0	,	21	)
(	0	,	22	)
(	0	,	23	)
(	0	,	24	)
(	0	,	25	)
(	0.517553	,	26	)
(	0.56926	,	27	)
(	0.643991	,	28	)
(	0.668717	,	29	)
(	0.719401	,	30	)
(	0.809135	,	31	)
(	0.875647	,	32	)
(	0.931571	,	33	)
(	1.22668	,	34	)
(	1.22713	,	35	)
(	1.26833	,	36	)
(	1.42141	,	37	)
(	1.51535	,	38	)
(	1.53571	,	39	)
(	1.5602	,	40	)
(	1.61652	,	41	)
(	1.73418	,	42	)
(	1.79682	,	43	)
(	1.91544	,	44	)
(	2.04362	,	45	)
(	2.11953	,	46	)
(	2.39634	,	47	)
(	2.47221	,	48	)
(	2.51998	,	49	)
(	2.58264	,	50	)
(	2.64914	,	51	)
(	2.65189	,	52	)
(	3.18275	,	53	)
(	3.4739	,	54	)
(	3.7324	,	55	)
(	3.7324	,	56	)
(	4.51243	,	57	)
(	5.09263	,	58	)
(	5.17248	,	59	)
(	10.7160	,	60	)

		};
		\addlegendentry{\textbf{Enabled}}
	
	\end{axis}
	\end{tikzpicture}
	\caption{Performance profile graph with GRASP enabled or disabled after 1800 seconds, GAP / \# Instances } \label{performanceProfilesGRASP}
\end{figure}
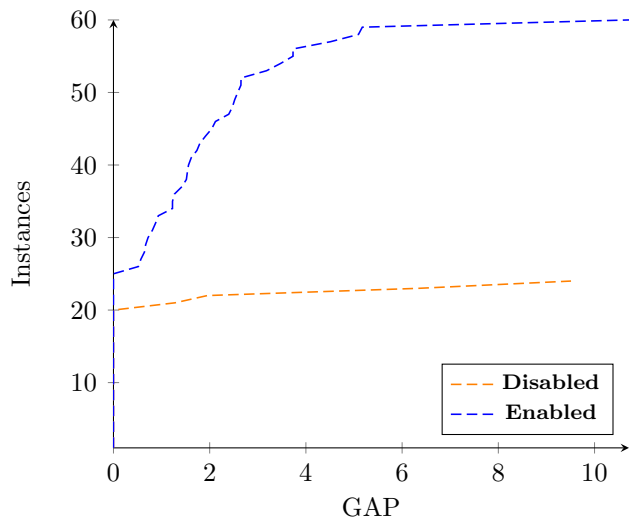

\clearpage

\subsection{Stabilization\label{section:3.2}}
When using a column generation procedure,
 the vector of dual variables may be quite different from an iteration to the next resulting in a slow convergence. For this reason, sometimes the stabilization is a critical step in order to reduce the number of variables and iterations needed to solve each reduced master problem (\citet{duMerle1999}).

In our approach, to perform the stabilization we follow the procedure in \cite{Pessoa2010} which depends on only one parameter. The idea consists in using a vector of dual variables which is a convex combination of the previous vector and the current solution of the dual problem.

Let $\pi=(\alpha, \beta, \gamma,\delta,\epsilon,\zeta)$ be a generic vector of dual multipliers, $\overline \pi$ be the best known vector of dual multipliers (found so far) and $\pi_{ReMP}$ be the current solution of the dual problem. Let $\overline c_S^j(\pi)$ be the reduced cost of $y_S^j$ computed with the dual variable $\pi$ and $LB(\pi)$ the lower bound provided by the same vector of dual multipliers, namely  $\pi$. Finally, let $z_D(\pi)$ be the value of the dual objective function of ReLRMP for the dual vector $\pi$, see (\ref{lb1}). The stabilization algorithm that we have implemented is described by the following pseudocode:
\begin{algorithm}[H]
	\begin{algorithmic}[1]
		\STATE $\Delta=\Delta_{init}$; $\overline \pi = 0$; $LB(\overline\pi)=0$; $GAP=1$;
		\WHILE {$GAP>\epsilon$}
		\STATE Solve ReLRMP, obtaining $z_{ReLRMP}$ and $\pi_{ReLRMP}$; $\pi_{st}=\Delta \pi_{ReLRMP}+(1-\Delta)\overline \pi$;
		\FOR {$j=1,\dots,n$}
		\STATE Solve the pricing using $\pi_{st}$, obtaining $S$;
		
		\STATE	\textbf{if} $\overline c_S^j (\pi_{ReLRMP})< 0$  \textbf{then} Add variable $y_S^j$; \textbf{end if}
		
		\ENDFOR
		 \STATE{$LB(\pi_{st})=z(\pi_{st}^t)+\displaystyle\sum_{\substack{S,j:y_S^j added}}\overline c_S^j(\pi_{st})$};
		\IF{At least one variable was added}
		\IF{$LB(\pi_{st})>LB(\overline\pi)$}
		\STATE $\overline\pi=\pi_{st}$;  $LB(\overline\pi)=LB(\pi_{st})$;
		\ENDIF
		\ELSE
		\STATE $\overline\pi=\pi_{st}$; $LB(\overline\pi)=LB(\pi_{st})$;
		\ENDIF
		\STATE $GAP=\frac{z_{ReLRMP}-LB(\overline\pi)}{z_{ReLRMP}}$;
		
		\STATE	\textbf{if} $GAP<1-\Delta$   \textbf{then} $\Delta=1-GAP$; \textbf{end if}
	
		\ENDWHILE
	\end{algorithmic}
	\caption{Stabilization in $ReLRMP$.\label{c3: al stabilization}}
\end{algorithm}

In words, the algorithm performs a while loop where in each iteration it makes a convex combination of the current vector of dual multipliers and the best vector of multipliers found so far. This loop ends whenever both vectors of multipliers are close enough based on the gap between the incumbent lower bound and the actual value of the reduced master problem. It is important to realize that  the coefficient (importance), $\Delta$,  given in the convex combination to $\pi_{ReLRMP}$ (the current solution of ReLRMP) increases with the number of iterations of the algorithm since $\Delta=1-GAP$ and $GAP$ decreases with the number of iterations. Eventually in the very last iterations of the stabilization algorithm we will use the actual vector of dual multipliers since $\pi_{st}\approx \pi_{ReLRMP}$.  In our implementation, we have chosen $\Delta=0.6$ based on the computational study shown in Figure \ref{performanceProfilesStabilization}. As one can observe in this figure, the best performance profile is obtained by $\Delta=0.6$ (green dashed line) because it is the configuration that solves the largest number of problem within the time limit.

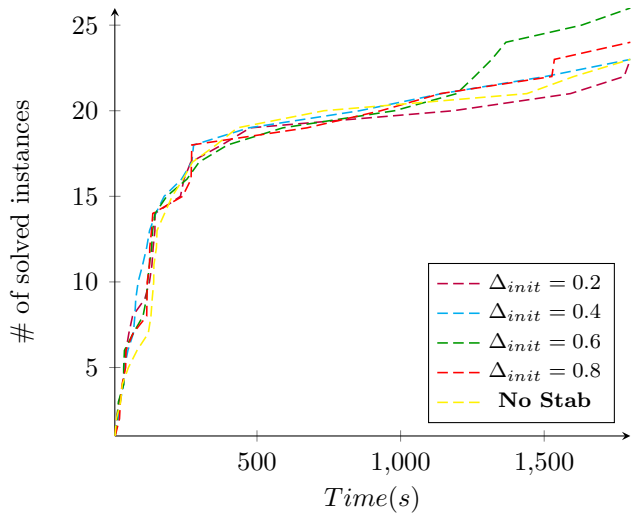
\begin{figure}[!ht] \centering
	\begin{tikzpicture}[scale=1.0,font=\footnotesize]
	\begin{axis}[axis x line=bottom,  axis y line=left,
	xlabel=$Time(s)$,
	ylabel=\# of solved instances,
	legend style={legend pos=south east,font=\scriptsize}]

	\addplot[purple,semithick,dash pattern=on 4pt off 2pt] plot coordinates {		

(	5.02	,	1	)
(	11.43	,	2	)
(	19.58	,	3	)
(	34.75	,	4	)
(	39.39	,	5	)
(	43.81	,	6	)
(	53.87	,	7	)
(	66.66	,	8	)
(	109.31	,	9	)
(	125.67	,	10	)
(	134.23	,	11	)
(	138.39	,	12	)
(	144.54	,	13	)
(	144.89	,	14	)
(	233.48	,	15	)
(	245.74	,	16	)
(	269.63	,	17	)
(	377.36	,	18	)
(	476.31	,	19	)
(	1183.53	,	20	)
(	1591.24	,	21	)
(	1778.09	,	22	)
(	1800	,	23	)

	};
	\addlegendentry{\textbf{$\Delta_{init} = 0.2$}}

	\addplot[cyan,semithick,dash pattern=on 4pt off 2pt] plot coordinates {		
	
(	5.46	,	1	)
(	16.4	,	2	)
(	23.77	,	3	)
(	35.54	,	4	)
(	38.45	,	5	)
(	50.51	,	6	)
(	69.62	,	7	)
(	74.67	,	8	)
(	77.88	,	9	)
(	86.21	,	10	)
(	101.81	,	11	)
(	115.85	,	12	)
(	125.96	,	13	)
(	147.63	,	14	)
(	176.79	,	15	)
(	237.22	,	16	)
(	272.97	,	17	)
(	279.33	,	18	)
(	471.53	,	19	)
(	857.19	,	20	)
(	1142.57	,	21	)
(	1504.01	,	22	)
(	1800	,	23	)

	};
	\addlegendentry{\textbf{$\Delta_{init} = 0.4$}}	

	\addplot[Darkgreen,semithick,dash pattern=on 4pt off 2pt] plot coordinates {		
	
(	6.44	,	1	)
(	12.32	,	2	)
(	17.57	,	3	)
(	35.61	,	4	)
(	35.84	,	5	)
(	38.45	,	6	)
(	68.84	,	7	)
(	103.17	,	8	)
(	114.72	,	9	)
(	119.24	,	10	)
(	125.26	,	11	)
(	133.1	,	12	)
(	136.5	,	13	)
(	145.14	,	14	)
(	185.61	,	15	)
(	255.92	,	16	)
(	298.51	,	17	)
(	397.49	,	18	)
(	589.71	,	19	)
(	979.08	,	20	)
(	1195.07	,	21	)
(	1257.66	,	22	)
(	1317.39	,	23	)
(	1366.6	,	24	)
(	1630.31	,	25	)
(	1800	,	26	)
	
	};
	\addlegendentry{\textbf{$\Delta_{init} = 0.6$}}	

	\addplot[red,semithick,dash pattern=on 4pt off 2pt] plot coordinates {		
	
(	6.28	,	1	)
(	21.27	,	2	)
(	24.26	,	3	)
(	30.09	,	4	)
(	42.61	,	5	)
(	43.9	,	6	)
(	68.5	,	7	)
(	115.97	,	8	)
(	117.01	,	9	)
(	117.86	,	10	)
(	125.81	,	11	)
(	126.57	,	12	)
(	132.86	,	13	)
(	137.22	,	14	)
(	240.89	,	15	)
(	270.46	,	16	)
(	271.96	,	17	)
(	272.85	,	18	)
(	673.87	,	19	)
(	931.07	,	20	)
(	1138.52	,	21	)
(	1526.73	,	22	)
(	1535.45	,	23	)
(	1800	,	24	)

	};
	\addlegendentry{\textbf{$\Delta_{init} = 0.8$}}	

	\addplot[yellow,semithick,dash pattern=on 4pt off 2pt] plot coordinates {		
	
(	5.78	,	1	)
(	10.35	,	2	)
(	25.26	,	3	)
(	30.95	,	4	)
(	52.35	,	5	)
(	82.44	,	6	)
(	121.51	,	7	)
(	130.81	,	8	)
(	135.14	,	9	)
(	140.37	,	10	)
(	141	,	11	)
(	149.84	,	12	)
(	151.91	,	13	)
(	178.24	,	14	)
(	204.6	,	15	)
(	242.35	,	16	)
(	275.54	,	17	)
(	357.48	,	18	)
(	431.54	,	19	)
(	733.01	,	20	)
(	1439.63	,	21	)
(	1603.3	,	22	)
(	1800	,	23	)

	};
	\addlegendentry{\textbf{No Stab}}

	\end{axis}
	\end{tikzpicture}
	\caption{Performance profile graph with different combination of $\Delta_{init}$, \#solved instances / $n$ } \label{performanceProfilesStabilization}
\end{figure}

In order to show the performance of the stabilization algorithm (Algorithm \ref{c3: al stabilization}), we report in Figure \ref{fig:dd} the evolution of the lower and upper bounds with respect to number of iterations.
Results reported here correspond to a single example. When Stabilization generally results in a better behavior. One can realize that the dual bound is not infinity at iteration 0 and that it does not improve for some iterations. The reason is because we start with a feasible solution of the problem.
\begin{figure}
\hspace*{-0.75cm}
\begin{subfigure}{.6\textwidth}
  \centering
  \includegraphics[scale=0.7]{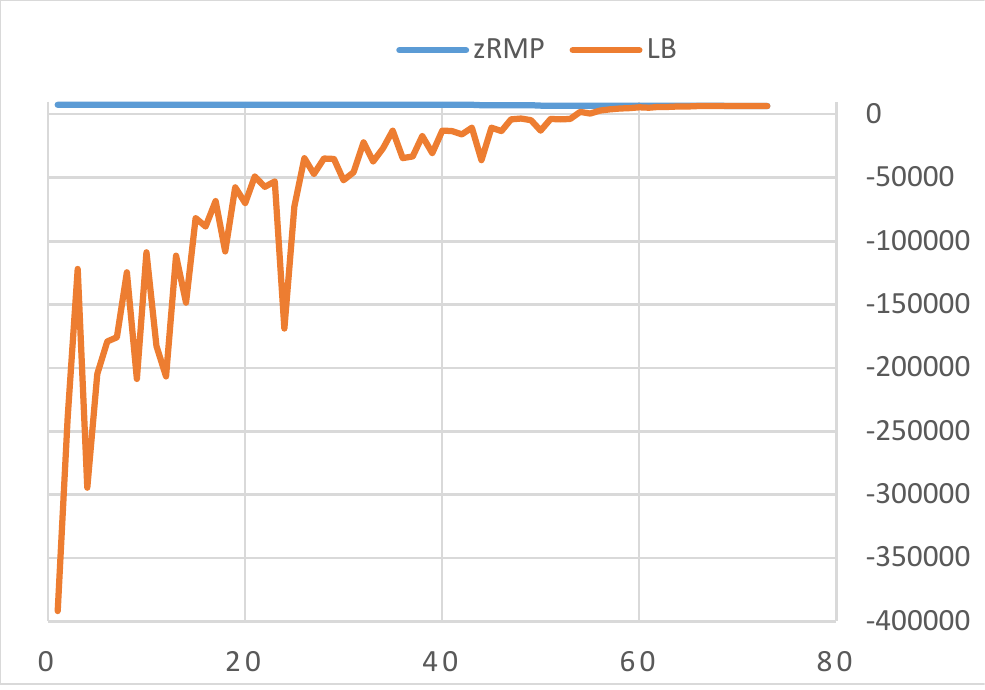}
  \caption{Stabilization disabled}
  \label{fig:sfig1}
\end{subfigure}%
\hspace*{-1.25cm}
\begin{subfigure}{.6\textwidth}
  \centering
  \includegraphics[scale=0.7]{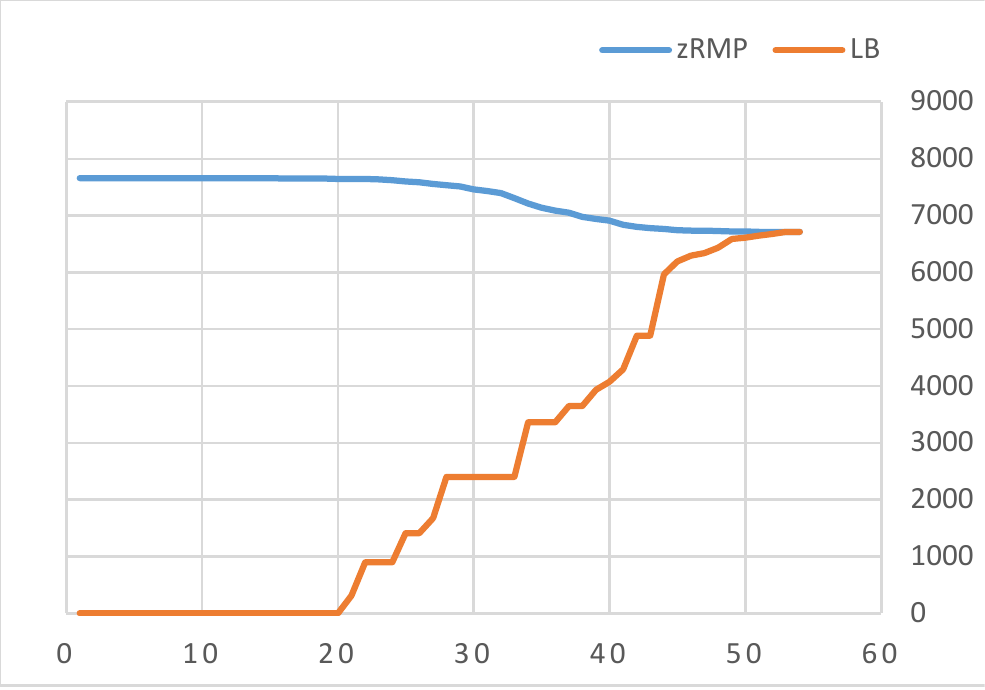}
  \caption{Stabilization enabled}
  \label{fig:sfig2}
\end{subfigure}
\caption{Bound's behavior at the root node in a particular instance on successive iterations.}
\label{fig:dd}
\end{figure}

The control over the dual variables significantly improves the necessary number of iterations and the number of variables used to certify optimality. Note that this improvement becomes more important where $MP$ is solved using a branch-and-bound procedure because the number of variables should be small in every node.


\subsection{HurryPricer: the Pricer heuristic\label{section:3.3}}

The pricing subproblem can be solved optimally by the dynamic programming algorithm described in Section \ref{section:2.5} with a worst case complexity of $O(n^3)$. However, this complexity may be excessive if the number of calls to that routine is large. For that reason, we have developed an alternative pricer heuristic that looks, in a greedy manner, for new variables in the pricing process with much less computational burden. Of course, if the heuristic does not find any variable to be added we need to resort to the exact pricer either to certify optimality or to find alternative variables that were not found in the heuristic phase.

A brief pseudocode description of the heuristic pricer is given in the Appendix.

\begin{algorithm}
	\caption{HurryPricer}
	\begin{algorithmic}[1]
		\STATE Input($\alpha, \beta, \gamma, \epsilon,\delta, \zeta$);  $S = \emptyset$;
		\FOR{a set of selected $j$}
		\STATE $\bar{c}^j_S = 0$; $k'= 0$; $l = 1$;
		\WHILE{($k' \neq n$) and ($l < n + 1 $)}
		\STATE Continue = True;  $k = k' + 1$;
		\WHILE{(Continue is True) and ($k < n +1$)}
		\STATE $d_{i_lj}^k = \lambda^k c_{i_lj} + r_{i_lj}  	\epsilon_k
		+  (n^2 - r_{i_lj} + 1 ) \epsilon_{k-1} - \alpha_{i_l} - \beta_k $;
		\IF{$d_{i_lj}^k < 0$}
		\IF {we consider cuts}
		\STATE $d_{i_lj}^k = d_{i_lj}^k + \sum^n_{ i'=1} \sum^n_{\substack{ i'{j}=1: \\ r_{ i' j'} \leq r_{i_lj}}} \zeta_{ i' j'}^k + \sum^n_{ i'=1} \sum^n_{\substack{ j'=1: \\ r_{ i' j'} \geq r_{i_lj}}}  \zeta_{ i' j'}^{k-1}$;
		\IF{$d_{i_lj}^k < 0$}
		\STATE 	$\bar{c}^j_S = \bar{c}^j_S + d_{i_lj}^k$;  $S_j = S_j \cup \{ (i_l,j) \}$;
		\STATE Continue = False; $k' = k$;
		\ENDIF
		\ELSE
		\STATE 	$\bar{c}^j_S = \bar{c}^j_S + d_{i_lj}^k$;  $S_j = S_j \cup \{ (i_l,j) \}$;
		
		\STATE Continue = False; $k' = k$;				
		\ENDIF	
		\ENDIF
		\STATE $k = k + 1$;
		\ENDWHILE
		\STATE $l = l + 1$;	
		\ENDWHILE
		\IF{$\bar{c}^j_S + \delta + \gamma_j < 0$}
		\STATE 	$S = S \cup S_j$;
		\ENDIF
		\ENDFOR
		\RETURN $S$;
	\end{algorithmic}\label{hurryPricer}
\end{algorithm}

In the following we analyze whether is is advisable to combine stabilization techniques and pricing heuristics in the pricing subproblem. We show in Figure \ref{performanceProfilesHP}  the performance profiles of time versus number of solved instances. From this figure one can observe that combining stabilization and Hurry Pricer seems to have a slightly better behavior than the remaining options.  This conclusion is reinforced by the data shown in Table \ref{table:stabHP}  based on computing time, number of variables and nodes required by the different combinations.

\begin{figure}[!ht] \centering
	\begin{tikzpicture}[scale=1.0,font=\footnotesize]
	\begin{axis}[axis x line=bottom,  axis y line=left,
	xlabel=$Time(s)$,
	ylabel=\# of solved instances,
	legend style={legend pos=south east,font=\scriptsize}]
	
	\addplot[Darkgreen,semithick,dash pattern=on 4pt off 2pt] plot coordinates {		
	
(	2.1	,	1	)
(	5.25	,	2	)
(	5.34	,	3	)
(	11.08	,	4	)
(	11.87	,	5	)
(	15.23	,	6	)
(	40.75	,	7	)
(	41.57	,	8	)
(	68.75	,	9	)
(	102.15	,	10	)
(	108.59	,	11	)
(	113.08	,	12	)
(	116.97	,	13	)
(	123.49	,	14	)
(	123.83	,	15	)
(	153.41	,	16	)
(	162.46	,	17	)
(	265.99	,	18	)
(	312.25	,	19	)
(	438	,	20	)
(	880.77	,	21	)
(	1022.28	,	22	)
(	1240.06	,	23	)
(	1316.86	,	24	)
(	1661.71	,	25	)
(	1800	,	26	)

	};
	\addlegendentry{\textbf{Stab+HP}}	

	\addplot[blue,semithick,dash pattern=on 4pt off 2pt] plot coordinates {		
	
(	4.04	,	1	)
(	5.61	,	2	)
(	6.38	,	3	)
(	20.73	,	4	)
(	25.77	,	5	)
(	40.99	,	6	)
(	46.7	,	7	)
(	66.99	,	8	)
(	91.68	,	9	)
(	100.19	,	10	)
(	104.69	,	11	)
(	113.2	,	12	)
(	136.29	,	13	)
(	142.39	,	14	)
(	149.53	,	15	)
(	161.86	,	16	)
(	219.38	,	17	)
(	301.17	,	18	)
(	416.76	,	19	)
(	436.35	,	20	)
(	712.1	,	21	)
(	1035.18	,	22	)
(	1419.84	,	23	)
(	1582.06	,	24	)
(	1620.33	,	25	)
(	1800	,	26	)

	};
	\addlegendentry{\textbf{HP}}	

	\addplot[red,semithick,dash pattern=on 4pt off 2pt] plot coordinates {		

(	6.44	,	1	)
(	12.32	,	2	)
(	17.57	,	3	)
(	35.61	,	4	)
(	35.84	,	5	)
(	38.45	,	6	)
(	68.84	,	7	)
(	103.17	,	8	)
(	114.72	,	9	)
(	119.24	,	10	)
(	125.26	,	11	)
(	133.1	,	12	)
(	136.5	,	13	)
(	145.14	,	14	)
(	185.61	,	15	)
(	255.92	,	16	)
(	298.51	,	17	)
(	397.49	,	18	)
(	589.71	,	19	)
(	979.08	,	20	)
(	1195.07	,	21	)
(	1257.66	,	22	)
(	1317.39	,	23	)
(	1366.6	,	24	)
(	1630.31	,	25	)
(	1800	,	26	)

	};
	\addlegendentry{\textbf{Stab}}	

	\addplot[yellow,semithick,dash pattern=on 4pt off 2pt] plot coordinates {	

(	5.78	,	1	)
(	10.35	,	2	)
(	25.26	,	3	)
(	30.95	,	4	)
(	52.35	,	5	)
(	82.44	,	6	)
(	121.51	,	7	)
(	130.81	,	8	)
(	135.14	,	9	)
(	140.37	,	10	)
(	141	,	11	)
(	149.84	,	12	)
(	151.91	,	13	)
(	178.24	,	14	)
(	204.6	,	15	)
(	242.35	,	16	)
(	275.54	,	17	)
(	357.48	,	18	)
(	431.54	,	19	)
(	733.01	,	20	)
(	1439.63	,	21	)
(	1603.3	,	22	)
(	1800	,	23	)

	};
	\addlegendentry{\textbf{No Stab No HP}}

	\end{axis}
	\end{tikzpicture}
	\caption{Performance profile graph of \#solved instances with different combinations of Hurry Pricer (HP) and stabilization (Stab). } \label{performanceProfilesHP}
\end{figure}
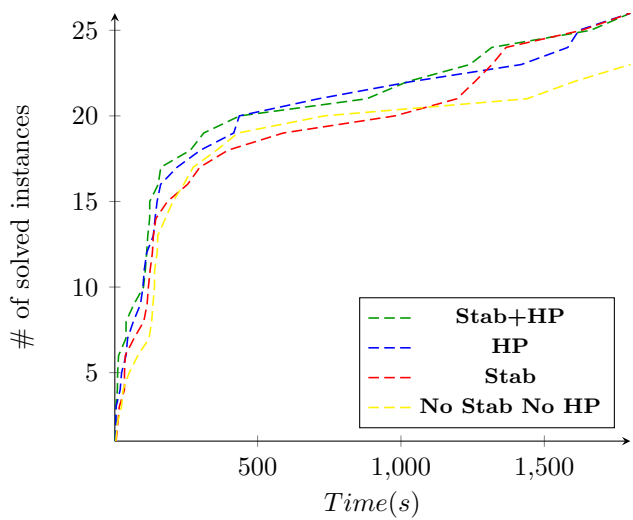

{\centering
\begin{table}[H]
\begin{center}
{\small
\begin{tabular}{crrrr}
 \toprule
HP&  Stab & Time (s) & Variables & Nodes\\
 \midrule
  No&Yes&422.62&6023&38\\
  Yes&No&358.41&5437&37\\
  Yes&Yes&333.75&5128&33\\
 \bottomrule
\end{tabular}
  \caption{Average CPU-Time, number of variables and number of nodes with different strategies of stabilization for the 25 solved instances in 1800 seconds.}
  \label{table:stabHP}
}
\end{center}
\end{table}}


\subsection{Preprocessing\label{section:3.4}}
In order to improve the performance of the algorithm we use two different preprocessings to set some variables to zero. Our approach is based on Claims 1 and 2 in \cite{Labbe2017}. The reader may observe that although those results fix to zero $x_{ij}^k$ variables, this variable-fixing can be translated to the new setting by the relation $x_{ij}^k=\sum_{S\ni(i,k)}y_S^j$ between the variables in $WOC$ and $MP$ formulations.

Therefore, the above results imply that those variables $y_S^j$ such that $(i,k)\in S$ and $x_{ij}^k=0$ will not be considered to be added to the ReLRMP. This can be simply enforced by setting the corresponding $d_{ij}^k=0$ in every pricing subproblem.

{\subsection{Branching strategies\label{section:3.5}}

Branching on original variables is a common option on Mixed Integer Master Problems where some set partition constraints are involved. See for instance \citet{Johnson1989}. In spite of that, we have also considered other branching strategies as using the set partitioning variables or the Ryan and Foster branching, \citet{Ryan1981,Barnhart1998}. However, these two alternatives were discarded because branching in original variables our pricing subproblem is polynomially solvable whereas using any of the other branching strategies mentioned above, makes it NP-hard.

Recall that $x_{ij}^k=\sum_{S\ni(i,k)}y_S^j$, thus, a way to branch on a fractional solution can be derived directly from satisfying integrality conditions of original variables.
\begin{pro}
If $x_{ij}^k\in\{0,1\}$ for $i,j,k=1,\dots, n$, then $y_S^j\in\{0,1\}$.
\begin{proof}
Suppose on the contrary there exists a variable with fractional value $y_{S^\prime}^{j^\prime}$. Since $x_{ij}^k$ are binary for all $i,j,k$ (in particular for $i_1,j^{\prime},k_1$ where $(i_1,k_1)$ a pair of $S^\prime$), there must be another fractional variable $y_{S^{\prime\prime}}^{j^\prime}$ such that $(i_1,k_1)\in S^{\prime\prime}$.

Note that $S^{\prime\prime}\neq S^{\prime}$ since the column generation procedure never generates duplicate variables, there is a pair $(i_2,k_2)$ such that either $(i_2,k_2) \in S^{\prime}$ or $(i_2,k_2)\in S^{\prime\prime}$ but not both.
Therefore,  we obtain the following relationship
$$1\ge\sum_{S\ni(i_1,k_1)}y_S^{j^\prime}>\sum_{S\ni(i_2,k_2)}y_S^{j^\prime}>0.$$
The first inequality comes directly from the formulation. The second inequality is strict because the term $\sum_{S\ni(i_2,k_2)}y_S^{j^\prime}$ has at least one fractional variable less than the term $\sum_{S\ni(i_1,k_1)}y_S^{j^\prime}$. The third inequality is strict because of the choice of $(i_2,k_2)$. Finally, a contradiction is found because $x_{i_2k_2}^{j\prime}$ is not binary.
\end{proof}
\end{pro}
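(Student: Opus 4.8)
The plan is to argue by contradiction, leaning on two ingredients that are already available: the linking identity $x_{ij}^k=\sum_{S\ni(i,k)}y_S^j$ from (\ref{c3r:1yx}) and the set-packing constraint (\ref{eq:3ma}), which gives $\sum_S y_S^{j}\le 1$ for every facility $j$. So I would assume that $x_{ij}^k\in\{0,1\}$ holds for all $i,j,k$ and yet some variable $y_{S'}^{j'}$ takes a value strictly in $(0,1)$, and then work entirely within the single facility $j'$, since both the linking identity and the packing constraint are indexed by a fixed $j$.

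The key intermediate step I would isolate first is a \emph{saturation} observation: whenever $x_{ij'}^{k}=1$ for some couple $(i,k)$, every set $S$ with $y_S^{j'}>0$ must contain $(i,k)$. Indeed, $x_{ij'}^{k}=1$ means $\sum_{S\ni(i,k)}y_S^{j'}=1$, and since this is a sum of nonnegative terms over a subset of the index set appearing in $\sum_S y_S^{j'}\le 1$, the two sums must coincide and equal $1$; hence no active set can omit $(i,k)$, for otherwise $\sum_{S\ni(i,k)}y_S^{j'}$ would be strictly smaller than $\sum_S y_S^{j'}$.

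With this in hand the contradiction is quick. Pick any $(i_1,k_1)\in S'$. Since $y_{S'}^{j'}>0$ appears in the sum defining $x_{i_1j'}^{k_1}$ and this value is binary, it equals $1$; by the saturation observation every active set at $j'$ contains $(i_1,k_1)$. Because $y_{S'}^{j'}<1$, the equality $\sum_{S\ni(i_1,k_1)}y_S^{j'}=1$ cannot be met by $S'$ alone, so there is a second set $S''\neq S'$ with $y_{S''}^{j'}>0$ and $(i_1,k_1)\in S''$. As $S'\neq S''$, their symmetric difference is nonempty; choose a couple $(i_2,k_2)$ in it and assume without loss of generality $(i_2,k_2)\in S'\setminus S''$. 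Then $y_{S'}^{j'}>0$ forces $x_{i_2j'}^{k_2}=1$, so by saturation again every active set, in particular $S''$, must contain $(i_2,k_2)$ — contradicting $(i_2,k_2)\notin S''$. The symmetric case $(i_2,k_2)\in S''\setminus S'$ is identical with the roles of $S'$ and $S''$ exchanged.

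I expect the main obstacle to be phrasing the saturation observation so that the packing constraint (\ref{eq:3ma}) is genuinely exploited \emph{at equality}; once that tightness is pinned down, the remainder is the routine symmetric-difference argument. The alternative route keeps the explicit chain $1\ge\sum_{S\ni(i_1,k_1)}y_S^{j'}>\sum_{S\ni(i_2,k_2)}y_S^{j'}>0$ and reads off the fractionality of $x_{i_2j'}^{k_2}$ directly; I find the saturation formulation cleaner because it pins down why the drop between the two sums is strict rather than merely asserting it.
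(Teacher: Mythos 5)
Your proof is correct, and it shares the paper's overall skeleton: assume some $y_{S'}^{j'}$ is fractional, produce a second active set $S''\neq S'$ containing $(i_1,k_1)$, pick $(i_2,k_2)$ in the symmetric difference, and reach a contradiction. The genuine difference lies in the key step. The paper finishes with the chain $1\ge\sum_{S\ni(i_1,k_1)}y_S^{j'}>\sum_{S\ni(i_2,k_2)}y_S^{j'}>0$ and concludes that $x_{i_2j'}^{k_2}$ is fractional; the middle strict inequality is only asserted (``one fractional variable less''), and as written it is not self-evident, since a priori the second sum could pick up sets containing $(i_2,k_2)$ but not $(i_1,k_1)$, which the first sum does not count. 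Your saturation lemma --- if $x_{ij'}^{k}=1$ then, combining (\ref{c3r:1yx}) with the packing constraint (\ref{eq:3ma}) forced to hold at equality, every set with $y_S^{j'}>0$ must contain $(i,k)$ --- is precisely what eliminates such sets, and you then bypass the inequality chain entirely by deriving a direct membership contradiction: $x_{i_2j'}^{k_2}=1$ forces $(i_2,k_2)\in S''$, contradicting the choice of $(i_2,k_2)$, with the symmetric case handled by exchanging $S'$ and $S''$. So your formulation buys a rigorization of the paper's argument: it makes the role of constraint (\ref{eq:3ma}) explicit, whereas the paper's proof uses it only tacitly in the strictness of the middle inequality (and in asserting, via a remark about the column generation never duplicating variables, what your argument derives directly). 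One minor observation in your favor: where the paper asks for a second \emph{fractional} variable, you only require $y_{S''}^{j'}>0$, which is all the argument needs --- positivity together with the saturated sum equal to $1$ makes it fractional automatically.
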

The reader may note that this branching can be seen as a SOS1 branching \citep{Beale1970} since at most one of the above $y_S^j$ variables can assume the value 1.

The way to implement this branching in the pricing subproblem is to set locally (in the current node) to zero the $y_S^j$ variables which are in conflict with the condition implied by the branch $x_{ij}^k=0$ or $x_{ij}^k=1$.

In the case $x_{ij}^k=0$ we set $y_S^j=0$ for all sets $S$ containing couples $(i,k)\in S$.
Analogously, in the case $x_{ij}^k=1$ we set $y_S^{j'}=0$ for all sets $S$ containing  $(i,k)\in S$ such that $j\ne j'$, $(i',k)\in S$ such that $i\ne i'$  or $(i,k')\in S$ such that $k\ne k'$.

This condition can be transferred to the pricing subproblem modifying the $d_{ij}^k$ coefficients accordingly. Specifically, this transformation is done as follows:
\begin{itemize}
\item If $x_{ij}^k=0$ then $ d_{ij}^k=0.$
\item If $x_{ij}^k=1 $ then $
\left\{\begin{array}{l}
d_{i j'}^k=0,\quad \forall j'\neq j.\\
d_{i' j'}^k=0,\quad \forall j'\forall i'\neq i.\\
d_{i j'}^{ k'}=0,\quad \forall j'\forall k'\neq k.\\
\end{array}\right.$
\end{itemize}
Moreover,  it is also well-known that branching on SOS constraints (original variables) gives rise to more balanced branching trees (see e.g. Chapter 7 of \citep{Wolsey1998}) than branching on the variables of $MP$.

Among the fractional original variables one has to decide which will be the next variable to branch on. One of the easiest techniques for this choice is to consider the \emph{most fractional variable}. This is not difficult to implement but it is not better than choosing randomly (\citep{Achterberg2005}). Alternative techniques are \emph{pseudocost branching} (\citep{Benichou1971}) or \emph{strong branching} (\citet{Applegate1995}) although they are rather costly.

This issue has motivated us to propose another rule to select the variable to branch on, based on the improvement of the bounds in each of the new created nodes. We use the following indices corresponding to the down and up branches of the variable $x_{ij}^k$:
\begin{equation}
\varsigma_{ij}^{k,-}=\frac{\lambda^kc_{ij}}{x_{ij}^k}\text{ and }\varsigma_{ij}^{k,+}=\frac{\lambda^kc_{ij}}{1-x_{ij}^k}. \label{branch-n1}
\end{equation}
They account, respectively, for the unitary contribution to the objective function due to fixing the variable $x_{ij}^k$ either to zero (down branching) or to one (up branching). Branching down stimulates the improvement of the lower bound, whereas branching up helps the problem to find integer solutions.

We have tested several strategies that make use of the indices, $\varsigma$, defined above.
\begin{description}
\item[ Strategy 1: ] $\arg\min\{\theta\varsigma_{ij}^{k,-}+(1-\theta)\varsigma_{ij}^{k,+}:0<x_{ij}^k<1\}$
\item[ Strategy 2: ] $\arg\min\{\min\{\varsigma_{ij}^{k,-},\varsigma_{ij}^{k,+}\}:0<x_{ij}^k<1\}$
\item[ Strategy 3: ] $\arg\min\{\max\{\varsigma_{ij}^{k,-},\varsigma_{ij}^{k,+}\}:0<x_{ij}^k<1\}$.
\end{description}

Based on our computational experience (see Figure \ref{performanceProfilesBranching}), we have concluded that the best strategy to choose the following variable to branch on corresponds to strategy 1 with $\theta=0.5$.

\begin{figure}[!ht] \centering
	\begin{tikzpicture}[scale=1.0,font=\footnotesize]
	\begin{axis}[axis x line=bottom,  axis y line=left,
	xlabel=$Time(s)$,
	ylabel=\# of solved instances,
	legend style={legend pos=south east,font=\scriptsize}]

	\addplot[purple,semithick,dash pattern=on 4pt off 2pt] plot coordinates {		

(	2.21	,	1	)
(	5.23	,	2	)
(	5.29	,	3	)
(	10.97	,	4	)
(	11.45	,	5	)
(	39.67	,	6	)
(	40.67	,	7	)
(	82.98	,	8	)
(	95.05	,	9	)
(	109.86	,	10	)
(	115.26	,	11	)
(	120.01	,	12	)
(	149.42	,	13	)
(	230.57	,	14	)
(	257.13	,	15	)
(	334.43	,	16	)
(	401.19	,	17	)
(	588.17	,	18	)
(	666.21	,	19	)
(	1595.09	,	20	)
(	1779.86	,	21	)
(	1800	,	22	)

	};
	\addlegendentry{\textbf{S1, $\theta = 0.0$}}

	\addplot[cyan,semithick,dash pattern=on 4pt off 2pt] plot coordinates {		
	
(	2.17	,	1	)
(	5.12	,	2	)
(	5.32	,	3	)
(	10.95	,	4	)
(	11.39	,	5	)
(	33.14	,	6	)
(	39.93	,	7	)
(	40.87	,	8	)
(	52.11	,	9	)
(	84.04	,	10	)
(	104.39	,	11	)
(	109.78	,	12	)
(	120.44	,	13	)
(	187.92	,	14	)
(	243.08	,	15	)
(	257.25	,	16	)
(	260.34	,	17	)
(	327.23	,	18	)
(	546.82	,	19	)
(	561.19	,	20	)
(	675.92	,	21	)
(	884.24	,	22	)
(	1040.66	,	23	)
(	1508.16	,	24	)
(	1749.27	,	25	)
(	1779.21	,	26	)
(	1800	,	27	)

	};
	\addlegendentry{\textbf{S1, $\theta = 0.1$}}	

	\addplot[red,semithick,dash pattern=on 4pt off 2pt] plot coordinates {		
	
(	2.15	,	1	)
(	5.06	,	2	)
(	5.34	,	3	)
(	11.09	,	4	)
(	11.53	,	5	)
(	23.59	,	6	)
(	39.84	,	7	)
(	40.94	,	8	)
(	51.77	,	9	)
(	70.34	,	10	)
(	99.7	,	11	)
(	102.46	,	12	)
(	110.15	,	13	)
(	114.63	,	14	)
(	119.92	,	15	)
(	143.5	,	16	)
(	155.49	,	17	)
(	225.08	,	18	)
(	230.99	,	19	)
(	256.81	,	20	)
(	646.63	,	21	)
(	765.02	,	22	)
(	884.63	,	23	)
(	988.98	,	24	)
(	1546.69	,	25	)
(	1800	,	26	)

	};
	\addlegendentry{\textbf{S1, $\theta = 0.3$}}	

	\addplot[Darkgreen,semithick,dash pattern=on 4pt off 2pt] plot coordinates {		
	
(	2.06	,	1	)
(	5.11	,	2	)
(	5.29	,	3	)
(	10.94	,	4	)
(	11.4	,	5	)
(	19.68	,	6	)
(	40.14	,	7	)
(	40.9	,	8	)
(	55.84	,	9	)
(	84.16	,	10	)
(	99.7	,	11	)
(	102.1	,	12	)
(	110.86	,	13	)
(	113.95	,	14	)
(	120.5	,	15	)
(	126.15	,	16	)
(	193.67	,	17	)
(	243.37	,	18	)
(	256.85	,	19	)
(	273.74	,	20	)
(	509.03	,	21	)
(	882.16	,	22	)
(	1076.95	,	23	)
(	1334.4	,	24	)
(	1346.21	,	25	)
(	1744.06	,	26	)
(	1800	,	27	)

	};
	\addlegendentry{\textbf{S1, $\theta = 0.5$}}	

	\addplot[yellow,semithick,dash pattern=on 4pt off 2pt] plot coordinates {		
	
(	2.09	,	1	)
(	5.09	,	2	)
(	5.36	,	3	)
(	10.86	,	4	)
(	11.32	,	5	)
(	17.04	,	6	)
(	40.07	,	7	)
(	40.85	,	8	)
(	55.73	,	9	)
(	99.85	,	10	)
(	102.5	,	11	)
(	105.61	,	12	)
(	110.49	,	13	)
(	114.23	,	14	)
(	118.71	,	15	)
(	128.71	,	16	)
(	194.18	,	17	)
(	194.48	,	18	)
(	257.99	,	19	)
(	415.53	,	20	)
(	829.63	,	21	)
(	1482.29	,	22	)
(	1534.08	,	23	)
(	1800	,	24	)

	};
	\addlegendentry{\textbf{S1, $\theta = 0.7$}}	

	\addplot[pink,semithick,dash pattern=on 4pt off 2pt] plot coordinates {		
	
(	2.13	,	1	)
(	5.18	,	2	)
(	5.19	,	3	)
(	11	,	4	)
(	11.45	,	5	)
(	16.25	,	6	)
(	39.75	,	7	)
(	40.89	,	8	)
(	67.75	,	9	)
(	99.93	,	10	)
(	105.91	,	11	)
(	110.23	,	12	)
(	113.74	,	13	)
(	121.18	,	14	)
(	131.08	,	15	)
(	159.55	,	16	)
(	161.29	,	17	)
(	241.72	,	18	)
(	257.49	,	19	)
(	398.22	,	20	)
(	924.52	,	21	)
(	1276.27	,	22	)
(	1477.08	,	23	)
(	1497.38	,	24	)
(	1622.41	,	25	)
(	1800	,	26	)

	};
	\addlegendentry{\textbf{S1, $\theta = 0.9$}}

	\addplot[orange,semithick,dash pattern=on 4pt off 2pt] plot coordinates {		
	
(	2.1	,	1	)
(	5.25	,	2	)
(	5.34	,	3	)
(	11.08	,	4	)
(	11.87	,	5	)
(	15.23	,	6	)
(	40.75	,	7	)
(	41.57	,	8	)
(	68.75	,	9	)
(	102.15	,	10	)
(	108.59	,	11	)
(	113.08	,	12	)
(	116.97	,	13	)
(	123.49	,	14	)
(	123.83	,	15	)
(	153.41	,	16	)
(	162.46	,	17	)
(	265.99	,	18	)
(	312.25	,	19	)
(	438	,	20	)
(	880.77	,	21	)
(	1022.28	,	22	)
(	1240.06	,	23	)
(	1316.86	,	24	)
(	1661.71	,	25	)
(	1800	,	26	)

	};
	\addlegendentry{\textbf{S1, $\theta = 1.0$}}

	\addplot[blue,semithick,dash pattern=on 4pt off 2pt] plot coordinates {		
	
(	2.25	,	1	)
(	5.14	,	2	)
(	5.33	,	3	)
(	11.12	,	4	)
(	11.49	,	5	)
(	39.9	,	6	)
(	40.64	,	7	)
(	41.09	,	8	)
(	95.09	,	9	)
(	111.06	,	10	)
(	115.08	,	11	)
(	121.72	,	12	)
(	142.28	,	13	)
(	162.71	,	14	)
(	187.37	,	15	)
(	257.44	,	16	)
(	273.07	,	17	)
(	418.7	,	18	)
(	627.78	,	19	)
(	1061.43	,	20	)
(	1598.21	,	21	)
(	1762.61	,	22	)
(	1800	,	23	)

	};
	\addlegendentry{\textbf{S2}}

	\addplot[brown,semithick,dash pattern=on 4pt off 2pt] plot coordinates {		
	
(	2.1	,	1	)
(	5.18	,	2	)
(	5.27	,	3	)
(	10.65	,	4	)
(	11.91	,	5	)
(	17.59	,	6	)
(	39.71	,	7	)
(	40.81	,	8	)
(	65.43	,	9	)
(	99.82	,	10	)
(	102.2	,	11	)
(	110.01	,	12	)
(	110.17	,	13	)
(	115.04	,	14	)
(	118.38	,	15	)
(	144.96	,	16	)
(	230.43	,	17	)
(	255.15	,	18	)
(	291.97	,	19	)
(	503.6	,	20	)
(	507.55	,	21	)
(	1378.24	,	22	)
(	1656.89	,	23	)
(	1737.4	,	24	)
(	1761.19	,	25	)
(	1800	,	26	)

	};
	\addlegendentry{\textbf{S3}}		
	
	\end{axis}
	\end{tikzpicture}
	\caption{Performance profile graph of \#solved instances using different branching strategies.} \label{performanceProfilesBranching}
\end{figure}
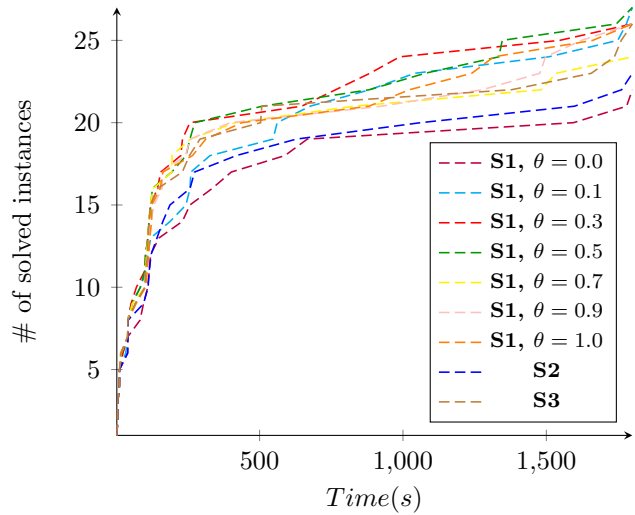


Each node of the branching tree can be fathomed before it is fully processed comparing lower bounds, as given by (\ref{lb1}) and (\ref{lb2}), with the current incumbent solution. This strategy implies reducing the number of calls to the pricing subproblem and as a result savings in the number of variables added to the restricted master problem.

\subsection{Valid inequalities\label{section:3.6}}

Clearly, the addition of valid inequalities (\ref{cuts}) to $MP$ modifies the structure of the master problem and thus the pricing must be modified accordingly. Let us denote by $\zeta_{ij}^k$ the dual variable associated with valid inequality (\ref{cuts}) for indices $i,j,k$. After some calculation, one obtains the following expression of the reduced costs of variable $y_S^j$:
\scriptsize
$$\overline c_S^j=c_S^j+\gamma_j^*+\delta^*+\sum_{k=2}^n\sum_{i'=1}^n\sum_{ j'=1}^n\left(\sum_{\substack{(i,k)\in S\\:r_{i' j'}\ge r_{ij}}}(\epsilon_k^*+\zeta_{i' j'}^{k*})+\sum_{\substack{(i,k-1)\in S\\:r_{i' j'}\le r_{ij}}}(\epsilon_{k}^*+\zeta_{i' j'}^{k*})\right)-\sum_{\substack{i=1\\:(i,\cdot)\in S}}^n\alpha_i^*-\sum_{\substack{k=1\\:(\cdot,k)\in S}}^n\beta_k^*.$$
\normalsize
Furthermore, solving the pricing subproblem to find a new column or to certify optimality of the column generation algorithm requires to adapt the dynamic programming algorithm that computes the $g(i_l,k)$ terms using the new dual multipliers. This implies to modify the $D_j$ matrices. Once again, after some calculations the modified $d_{ij}^k$ elements are now given by:
\scriptsize
$$d_{ij}^k=\left\{\begin{array}{ll}\displaystyle\lambda^kc_{ij}+\sum_{i'=1}^n\sum_{\substack{ j'=1\\:r_{i' j'}\le r_{ij}}}(\epsilon_{k+1}+\zeta_{i' j'}^{k+1})-\alpha_i-\beta_k&\text{if }k=1\\
\displaystyle\lambda^kc_{ij}+\sum_{i'=1}^n\sum_{\substack{ j'=1\\:r_{i' j'}\ge r_{ij}}}^n(\epsilon_k+\zeta_{i' j'}^{k})+\sum_{i'=1}^n\sum_{\substack{ j'=1\\:r_{i' j'}\le r_{ij}}}(\epsilon_{k+1}+\zeta_{i' j'}^{k+1})-\alpha_i-\beta_k&\text{if }k=2,\dots,n-1\\
\displaystyle\lambda^kc_{ij}+\sum_{i'=1}^n\sum_{\substack{ j'=1\\:r_{i' j'}\ge r_{ij}}}^n(\epsilon_k+\zeta_{i' j'}^{k})-\alpha_i-\beta_k,&\text{if }k=n.\\\end{array}\right.$$
\normalsize
These new elements allow us to apply the adapted column generation algorithm to solve LRMP, reinforced with valid inequalities (\ref{cuts}). The implementation details of how to adapt these new elements within the pricer and the hurry pricer can be found in the appendix \ref{ap:hp}.

To justify the use of the mentioned cuts we have done some preliminary computational experiments with instances of sizes $n=50$ and $60$.  Table \ref{ResultsCuts} compares the behavior of the standard branch-and-price without cuts, ($\textbf{B\&P}(MP)$), against the strategy with cuts, $\textbf{B\&P\&C}(MP)$.

\begin{table}[!ht]
	\begin{center}
	  \scriptsize
		\begin{tabular}{ll|rrr|rrr}
			&	& \multicolumn{3}{c|}{$n=50$} & \multicolumn{3}{c}{$n=60$}  \\
			&	& $p=12$ & $p=16$ & $p=25$& $p=15$ & $p=20$ & $p=30$ \\ \hline				 \textbf{B\&P} &   $Time(s)$&7200.00&7200.00&7200.00&7200.00&7200.00&7200.00  \\
						
			(MP) & $|Vars|$&30277&24410&16617&28443&24146&19996 \\
			
			& $|Nodes|$&1016&2728&6149&1091&2013&3736 \\
			
                   & $\# unsolved$&10&10&10&10&10&10  \\

                   & $Gap(\%)$ &6.44&7.60&9.45&8.20&8.83&11.59 \\\hline

			\textbf{\BPC} & $Time(s)$&7200.00&7200.00&6697.44&7200.00&6864.94&7200.00\\	
				
			(MP) & $|Vars|$&14971&13627 &13725 &21094&16077&17634 \\
			
			& $|Nodes|$&55&1 &1&599&535&512  \\
			
			& $|Cuts|$&7807&7907 &9183&12999&16061&13342  \\

                   & $\# unsolved$&10&10 &9 &10&9&10 \\

                   & $Gap(\%)$&3.96&5.06 &3.87&7.04&6.83&7.48  \\\hline
				
		\end{tabular}
		\caption{Numerical results with and without cuts } \label{ResultsCuts}
	\end{center}
\end{table}

From Table \ref{ResultsCuts}, we conclude that it is always better to add cuts because the final gap is always smaller with this strategy. This solution scheme has been implemented and the results are reported in the next section.

\section{Computational Experiments\label{section:4}}

The \BPC\; implementation of the formulation $MP$  has been experimentally compared with the B\&C implementation of the formulation $WOC$  on the instances detailed below. The \BPC \; algorithm considered in these experiments is based on the description in the previous section.

The computer used for these tests has an Intel Core i7 CPU clocked at 2.8GHz with 8Gb of RAM. Each implementation has a maximum of 7,200 seconds (2 hours) to solve each individual instance.

Both implementations are using the  SCIP 4.0's API (see \cite{Maher2017}) and both are calling the LP solver of IBM ILOG Cplex 12.6.1.

\subsection{Instances\label{section:4.1}}

Since no standard libraries of instances for DOMP are available in public repositories we generate our own instances with the \textit{pseudorandom} number generator from the C random library.

We consider 9 sets of 30 instances. Each set has a different number of clients such that $n \in \{20,30,40,50,60,70,80,90,100\}$. For a given $n$, we generate one subset of 10 instances for each value of $p$, where $p \in \{ \left \lfloor{(n/4)} \right \rfloor, \left \lfloor{(n/3)} \right \rfloor, \left \lfloor{(n/2)} \right \rfloor \}$.

For a given $n$, we first randomly generate the Cartesian coordinates of the potential servers in the square $[0,400]^2$. Then, we calculate the cost for each pair of clients with the Euclidean distance between the two related nodes in the square. We round each distances to the nearest integer to build the cost matrices. We also fix the values of the matrix diagonal to the smallest admissible cost to avoid free self service.

Finally, we randomly generate the weighted ordered vector $\lambda$ such that, for each potential server $i=1,\dots, n$, $\lambda_i \in [n/4,n]$. The parameters for the generation process are given in table \ref{ParamGenInst}.

\begin{table}[!ht]
	\begin{center}
		\begin{tabular}{l||ccc|ccc|ccc|ccc|ccc}
			$n$			& \multicolumn{3}{c|}{20} & \multicolumn{3}{c|}{30} & \multicolumn{3}{c|}{40} & \multicolumn{3}{c|}{50} & \multicolumn{3}{c}{60}  \\ \hline
			$p$			& 5 & 6 & 10 & 7 & 10 & 15 & 10 & 13 & 20 & 12 & 16 & 25 & 15 & 20 & 30  \\ \hline
			$\lambda$	& \multicolumn{3}{c|}{$[5,20]^n$} & \multicolumn{3}{c|}{$[7,30]^n$} & \multicolumn{3}{c|}{$[10,40]^n$} & \multicolumn{3}{c|}{$[12,50]^n$} & \multicolumn{3}{c}{$[15,60]^n$}  \\ \hline\hline
			$n$			& \multicolumn{3}{c|}{70} & \multicolumn{3}{c|}{80} & \multicolumn{3}{c|}{90} & \multicolumn{3}{c|}{100}   \\ \hline
			$p$			& 17 & 23 & 35 & 20 & 26 & 40 & 22 & 30 & 45 & 25 & 33 & 50  \\ \hline
			$\lambda$	& \multicolumn{3}{c|}{$[17,60]^n$} & \multicolumn{3}{c|}{$[20,80]^n$} & \multicolumn{3}{c|}{$[22,90]^n$} & \multicolumn{3}{c|}{$[25,100]^n$}  \\

		\end{tabular}
	\caption{Parameters for the generation of the instances.} \label{ParamGenInst}
	\end{center}
\end{table}

All these instances, with $n$ up to 100, are available at \url{http://gom.ulb.ac.be/domp_repo/}.

\subsection{$MP$ vs $WOC$ linear relaxations \label{section:4.2}}

We assess experimentally the linear relaxation of $MP$ by comparing with $WOC$ on all the instances generated.  For these experiments, no cuts have been applied.

In Table \ref{ResultsLP20and30}, we report averages of the numerical results of the linear relaxation for both formulations. We report the values $GapLP(\%)$ which are the percentage gaps between the optimal integer values $z^*$ (alternatively the best known solution) and the linear relaxation optimal values $z^*_{LP}$ such that $GapLP(\%) = 100 (z^* - z^*_{LP})/ z^*$. We also report the computational times (in seconds).

Table \ref{ResultsLP20and30} also includes average number of variables ($|Vars|$) and required memory ($Memory(MB)$). The reader can see that, in terms of time, $MP$ has some room for improvement as compared with the professional implementation of Cplex used for solving $WOC$. On the contrary, we highlight the small number of variables that are used to certify optimality with this column generation approach $MP$.

\begin{table}[!ht]
	\begin{center}
	  \scriptsize
		\begin{tabular}{ll|rrr|rrr}
			&	& \multicolumn{3}{c|}{$n=20$} & \multicolumn{3}{c}{$n=30$}  \\
			&	& $p=5$ & $p=6$ & $p=10$ & $p=7$ & $p=10$ & $p=15$ \\ \hline	
			\textbf{B\&C} & $GapLP(\%)$    & 8.64	& 8.66 & 13.13	&	9.45 &	 10.38 & 14.28 \\
						
			(WOC) & $Time(s)$     & 0.14	&0.14& 0.12	&	0.70	&0.68 & 0.62 \\
			
			& $|Vars|$     & 8020	&8020&8020	&	27030	&27030 & 27030 \\
			
			& $Memory(MB)$     & 35	&35&35	&	101	&101 & 101 \\\hline

			\textbf{\BPC} & $GapLP(\%)$    & 	7.87 &	8.03 & 12.70	& 8.46 &	 9.81 & 13.83 \\	
				
			(MP) & $Time(s)$     & 1.19 &	0.89 & 0.63	&	6.04 &	3.98 & 3.74 \\
			
			& $|Vars|$     & 724	&656&537	&	1754	&1570 & 1484 \\
			
			& $Memory(MB)$     & 7	&6&4	&	20	&17 & 14 \\\hline
			
			&	& \multicolumn{3}{c|}{$n=40$} & \multicolumn{3}{c}{$n=50$}  \\
			&	& $p=10$ & $p=13$ & $p=20$ & $p=12$ & $p=16$ & $p=25$ \\ \hline	
			\textbf{B\&C} & $GapLP(\%)$    & 9.43	& 11.00 & 15.35	&	7.34 &	 8.77 & 12.97 \\
						
			(WOC) & $Time(s)$     & 2.51	&2.34& 2.09	&	7.35	&6.37 & 6.25 \\
			
			& $|Vars|$     & 64040	&64040&64040	&	125050	&125050 & 125050 \\
			
			& $Memory(MB)$     & 235	&235&235	&	451	&451 & 451 \\\hline

			\textbf{\BPC} & $GapLP(\%)$    & 	9.11 &	10.75 & 15.18& 6.98 &	 8.51 & 12.76 \\	
				
			(MP) & $Time(s)$     & 17.61 &	13.85 & 12.17	&	40.75 &	33.72 & 33.13 \\
			
			& $|Vars|$     & 3370	&3149&3111	&	5355	&5182 & 5175 \\
			
			& $Memory(MB)$     & 46	&39&35	&	82	&72 & 68 \\\hline
			
	&	& \multicolumn{3}{c|}{$n=60$} & \multicolumn{3}{c}{$n=70$}  \\
			&	& $p=15$ & $p=20$ & $p=30$ & $p=17$ & $p=23$ & $p=35$ \\ \hline	
			\textbf{B\&C} & $GapLP(\%)$    & 8.84	& 9.95 & 14.43	&	8.04 &	 9.19 & 13.73 \\
						
			(WOC) & $Time(s)$     & 15.98	&13.30& 12.27	&	40.78	 &35.65 & 29.40 \\
			
			& $|Vars|$     & 216060	&216060&216060	&	125050	&343070 & 343070 \\
			
			& $Memory(MB)$     & 764	&764&764	&	1214	&1214 & 1214 \\\hline

			\textbf{\BPC} & $GapLP(\%)$    & 	8.56 &	9.71 & 14.25	& 7.79 &	 9.04 & 13.62 \\	
				
			(MP) & $Time(s)$     & 94.79 &	72.47 & 92.70	&	176.21 &	 157.19 & 212.97 \\
			
			& $|Vars|$     & 8146	&7592&9069	&	11112	&11250 & 13648 \\
			
			& $Memory(MB)$     & 139	&120&142	&	211	&202 & 244 \\\hline
			
				&	& \multicolumn{3}{c|}{$n=80$} & \multicolumn{3}{c}{$n=90$}  \\
			&	& $p=20$ & $p=26$ & $p=40$ & $p=22$ & $p=30$ & $p=45$ \\ \hline	
			\textbf{B\&C} & $GapLP(\%)$    & 8.65	& 7.65 & 7.12	&	8.70 &	 6.60 & 6.69 \\
						
			(WOC) & $Time(s)$     & 67.42	&58.14& 47.63	&	128.70	 &96.74 & 82.19 \\
			
			& $|Vars|$     & 512080	&512080&512080	&	729090	&729090 & 729090 \\
			
			& $Memory(MB)$     & 1830	&1830&1830	&	2561	&2561 & 2561 \\\hline

			\textbf{\BPC} & $GapLP(\%)$    & 	8.53 &	7.48 & 7.08	& 8.55 &	 6.55 & 6.66 \\	
				
			(MP) & $Time(s)$     & 352.75 &	264.60 & 210.92	&	713.28 &	 459.07 & 404.18 \\
			
			& $|Vars|$     & 15704	&14163&11851	&	21566	&18451 & 16205 \\
			
			& $Memory(MB)$     & 330	&280&214	&	513	&404 & 336 \\\hline
			
				&	& \multicolumn{3}{c|}{$n=100$}  \\
			&	& $p=25$ & $p=33$ & $p=50$  \\ \hline	
			\textbf{B\&C} & $GapLP(\%)$    & --	& -- & --	 \\
						
			(WOC) & $Time(s)$     & --	&--& --	 \\
			
			& $|Vars|$     & 1000100	&1000100&1000100	 \\
			
			& $Memory(MB)$     & $>$4096	&$>$4096&$>$4096	\\\hline

			\textbf{\BPC} & $GapLP(\%)$    & 	7.94 &	7.40 & 6.59	 \\	
				
			(MP) & $Time(s)$     & 1417.65 &	939.40 & 667.86	 \\
			
			& $|Vars|$     & 30202	&26068&21101	 \\
			
			& $Memory(MB)$     & 809	&656&482 \\\hline

		\end{tabular}
		\caption{Numerical results on linear relaxation for $WOC$ and $MP$ } \label{ResultsLP20and30}
	\end{center}
\end{table}

As expected, according to Proposition \ref{c3-pro-inclusionMP-DOMP}, the integrality gap of formulation $MP$ outperforms the one by $WOC$. Moreover, formulation $MP$ also outperforms  $WOC$ in number of required variables (see Figure \ref{performanceProfiles1}) which results in much smaller memory requirements (see Figure \ref{performanceProfiles2}). Indeed, the implementation of $WOC$ fails to solve, already for sizes of $n=100$, the linear relaxation of all instances by lack of RAM memory; whereas with the same parameter configuration, formulation $MP$ does not experience that problem. Figure \ref{performanceProfiles2} shows the performance profile of the memory requirement of both formulations. As one can see  \textbf{B\&P\&C}(MP) outperforms $WOC$ with respect to this factor for all instance sizes.

\begin{figure}
\centering
\begin{minipage}[!ht]{0.45\textwidth}
	\begin{tikzpicture}[scale=0.7,font=\footnotesize]
	\begin{axis}[axis x line=bottom,  axis y line=left,
	xlabel=$n$,
	ylabel=Number of variables,
	legend style={legend pos=south east,font=\scriptsize}]

	\addplot[orange,semithick,dash pattern=on 4pt off 2pt] plot coordinates {		
	
(	20	,	8020	)
(	20	,	8020	)
(	20	,	8020	)
(	20	,	8020	)
(	20	,	8020	)
(	20	,	8020	)
(	20	,	8020	)
(	20	,	8020	)
(	20	,	8020	)
(	20	,	8020	)
(	20	,	8020	)
(	20	,	8020	)
(	20	,	8020	)
(	20	,	8020	)
(	20	,	8020	)
(	20	,	8020	)
(	20	,	8020	)
(	20	,	8020	)
(	20	,	8020	)
(	20	,	8020	)
(	20	,	8020	)
(	20	,	8020	)
(	20	,	8020	)
(	20	,	8020	)
(	20	,	8020	)
(	20	,	8020	)
(	20	,	8020	)
(	20	,	8020	)
(	20	,	8020	)
(	20	,	8020	)
(	30	,	27030	)
(	30	,	27030	)
(	30	,	27030	)
(	30	,	27030	)
(	30	,	27030	)
(	30	,	27030	)
(	30	,	27030	)
(	30	,	27030	)
(	30	,	27030	)
(	30	,	27030	)
(	30	,	27030	)
(	30	,	27030	)
(	30	,	27030	)
(	30	,	27030	)
(	30	,	27030	)
(	30	,	27030	)
(	30	,	27030	)
(	30	,	27030	)
(	30	,	27030	)
(	30	,	27030	)
(	30	,	27030	)
(	30	,	27030	)
(	30	,	27030	)
(	30	,	27030	)
(	30	,	27030	)
(	30	,	27030	)
(	30	,	27030	)
(	30	,	27030	)
(	30	,	27030	)
(	30	,	27030	)
(	40	,	64040	)
(	40	,	64040	)
(	40	,	64040	)
(	40	,	64040	)
(	40	,	64040	)
(	40	,	64040	)
(	40	,	64040	)
(	40	,	64040	)
(	40	,	64040	)
(	40	,	64040	)
(	40	,	64040	)
(	40	,	64040	)
(	40	,	64040	)
(	40	,	64040	)
(	40	,	64040	)
(	40	,	64040	)
(	40	,	64040	)
(	40	,	64040	)
(	40	,	64040	)
(	40	,	64040	)
(	40	,	64040	)
(	40	,	64040	)
(	40	,	64040	)
(	40	,	64040	)
(	40	,	64040	)
(	40	,	64040	)
(	40	,	64040	)
(	40	,	64040	)
(	40	,	64040	)
(	40	,	64040	)
(	50	,	125050	)
(	50	,	125050	)
(	50	,	125050	)
(	50	,	125050	)
(	50	,	125050	)
(	50	,	125050	)
(	50	,	125050	)
(	50	,	125050	)
(	50	,	125050	)
(	50	,	125050	)
(	50	,	125050	)
(	50	,	125050	)
(	50	,	125050	)
(	50	,	125050	)
(	50	,	125050	)
(	50	,	125050	)
(	50	,	125050	)
(	50	,	125050	)
(	50	,	125050	)
(	50	,	125050	)
(	50	,	125050	)
(	50	,	125050	)
(	50	,	125050	)
(	50	,	125050	)
(	50	,	125050	)
(	50	,	125050	)
(	50	,	125050	)
(	50	,	125050	)
(	50	,	125050	)
(	50	,	125050	)
(	60	,	216060	)
(	60	,	216060	)
(	60	,	216060	)
(	60	,	216060	)
(	60	,	216060	)
(	60	,	216060	)
(	60	,	216060	)
(	60	,	216060	)
(	60	,	216060	)
(	60	,	216060	)
(	60	,	216060	)
(	60	,	216060	)
(	60	,	216060	)
(	60	,	216060	)
(	60	,	216060	)
(	60	,	216060	)
(	60	,	216060	)
(	60	,	216060	)
(	60	,	216060	)
(	60	,	216060	)
(	60	,	216060	)
(	60	,	216060	)
(	60	,	216060	)
(	60	,	216060	)
(	60	,	216060	)
(	60	,	216060	)
(	60	,	216060	)
(	60	,	216060	)
(	60	,	216060	)
(	60	,	216060	)
(	70	,	343070	)
(	70	,	343070	)
(	70	,	343070	)
(	70	,	343070	)
(	70	,	343070	)
(	70	,	343070	)
(	70	,	343070	)
(	70	,	343070	)
(	70	,	343070	)
(	70	,	343070	)
(	70	,	343070	)
(	70	,	343070	)
(	70	,	343070	)
(	70	,	343070	)
(	70	,	343070	)
(	70	,	343070	)
(	70	,	343070	)
(	70	,	343070	)
(	70	,	343070	)
(	70	,	343070	)
(	70	,	343070	)
(	70	,	343070	)
(	70	,	343070	)
(	70	,	343070	)
(	70	,	343070	)
(	70	,	343070	)
(	70	,	343070	)
(	70	,	343070	)
(	70	,	343070	)
(	70	,	343070	)
(	80	,	512080	)
(	80	,	512080	)
(	80	,	512080	)
(	80	,	512080	)
(	80	,	512080	)
(	80	,	512080	)
(	80	,	512080	)
(	80	,	512080	)
(	80	,	512080	)
(	80	,	512080	)
(	80	,	512080	)
(	80	,	512080	)
(	80	,	512080	)
(	80	,	512080	)
(	80	,	512080	)
(	80	,	512080	)
(	80	,	512080	)
(	80	,	512080	)
(	80	,	512080	)
(	80	,	512080	)
(	80	,	512080	)
(	80	,	512080	)
(	80	,	512080	)
(	80	,	512080	)
(	80	,	512080	)
(	80	,	512080	)
(	80	,	512080	)
(	80	,	512080	)
(	80	,	512080	)
(	80	,	512080	)
(	90	,	729090	)
(	90	,	729090	)
(	90	,	729090	)
(	90	,	729090	)
(	90	,	729090	)
(	90	,	729090	)
(	90	,	729090	)
(	90	,	729090	)
(	90	,	729090	)
(	90	,	729090	)
(	90	,	729090	)
(	90	,	729090	)
(	90	,	729090	)
(	90	,	729090	)
(	90	,	729090	)
(	90	,	729090	)
(	90	,	729090	)
(	90	,	729090	)
(	90	,	729090	)
(	90	,	729090	)
(	90	,	729090	)
(	90	,	729090	)
(	90	,	729090	)
(	90	,	729090	)
(	90	,	729090	)
(	90	,	729090	)
(	90	,	729090	)
(	90	,	729090	)
(	90	,	729090	)
(	90	,	729090	)
(	100	,	1000100	)
(	100	,	1000100	)
(	100	,	1000100	)
(	100	,	1000100	)
(	100	,	1000100	)
(	100	,	1000100	)
(	100	,	1000100	)
(	100	,	1000100	)
(	100	,	1000100	)
(	100	,	1000100	)
(	100	,	1000100	)
(	100	,	1000100	)
(	100	,	1000100	)
(	100	,	1000100	)
(	100	,	1000100	)
(	100	,	1000100	)
(	100	,	1000100	)
(	100	,	1000100	)
(	100	,	1000100	)
(	100	,	1000100	)
(	100	,	1000100	)
(	100	,	1000100	)
(	100	,	1000100	)
(	100	,	1000100	)
(	100	,	1000100	)
(	100	,	1000100	)
(	100	,	1000100	)
(	100	,	1000100	)
(	100	,	1000100	)
(	100	,	1000100	)
		
	};
	\addlegendentry{\textbf{B\&P\&C}(MP)}

	\addplot[blue,semithick,dash pattern=on 4pt off 2pt] plot coordinates {		
	
	(	20	,	374	)
	(	20	,	412	)
	(	20	,	423	)
	(	20	,	426	)
	(	20	,	426	)
	(	20	,	433	)
	(	20	,	438	)
	(	20	,	440	)
	(	20	,	465	)
	(	20	,	482	)
	(	20	,	542	)
	(	20	,	572	)
	(	20	,	573	)
	(	20	,	591	)
	(	20	,	595	)
	(	20	,	596	)
	(	20	,	615	)
	(	20	,	624	)
	(	20	,	628	)
	(	20	,	679	)
	(	20	,	683	)
	(	20	,	723	)
	(	20	,	733	)
	(	20	,	739	)
	(	20	,	750	)
	(	20	,	758	)
	(	20	,	769	)
	(	20	,	775	)
	(	20	,	814	)
	(	20	,	824	)
	(	20	,	929	)
	(	20.33	,	1014	)
	(	20.66	,	1095	)
	(	20.99	,	1154	)
	(	21.32	,	1159	)
	(	21.65	,	1185	)
	(	21.98	,	1263	)
	(	22.31	,	1280	)
	(	22.64	,	1285	)
	(	22.97	,	1304	)
	(	23.3	,	1328	)
	(	23.63	,	1335	)
	(	23.96	,	1360	)
	(	24.29	,	1418	)
	(	24.62	,	1424	)
	(	24.95	,	1430	)
	(	25.28	,	1444	)
	(	25.61	,	1505	)
	(	25.94	,	1522	)
	(	26.27	,	1529	)
	(	26.6	,	1672	)
	(	26.93	,	1705	)
	(	27.26	,	1748	)
	(	27.59	,	1818	)
	(	27.92	,	1874	)
	(	28.25	,	1875	)
	(	28.58	,	1882	)
	(	28.91	,	1883	)
	(	29.24	,	1969	)
	(	29.57	,	1984	)
	(	30	,	2066	)
	(	30.33	,	2139	)
	(	30.66	,	2159	)
	(	30.99	,	2204	)
	(	31.32	,	2359	)
	(	31.65	,	2428	)
	(	31.98	,	2467	)
	(	32.31	,	2486	)
	(	32.64	,	2507	)
	(	32.97	,	2611	)
	(	33.3	,	2716	)
	(	33.63	,	2811	)
	(	33.96	,	2811	)
	(	34.29	,	2856	)
	(	34.62	,	2860	)
	(	34.95	,	2873	)
	(	35.28	,	2877	)
	(	35.61	,	2896	)
	(	35.94	,	2898	)
	(	36.27	,	2945	)
	(	36.6	,	2951	)
	(	36.93	,	2956	)
	(	37.26	,	3002	)
	(	37.59	,	3013	)
	(	37.92	,	3026	)
	(	38.25	,	3055	)
	(	38.58	,	3369	)
	(	38.91	,	3378	)
	(	39.24	,	3398	)
	(	39.57	,	3438	)
	(	40	,	3446	)
	(	40.33	,	3529	)
	(	40.66	,	3721	)
	(	40.99	,	3741	)
	(	41.32	,	3827	)
	(	41.65	,	3964	)
	(	41.98	,	3990	)
	(	42.31	,	4168	)
	(	42.64	,	4337	)
	(	42.97	,	4363	)
	(	43.3	,	4368	)
	(	43.63	,	4411	)
	(	43.96	,	4458	)
	(	44.29	,	4481	)
	(	44.62	,	4482	)
	(	44.95	,	4529	)
	(	45.28	,	4577	)
	(	45.61	,	4586	)
	(	45.94	,	4901	)
	(	46.27	,	4906	)
	(	46.6	,	5098	)
	(	46.93	,	5168	)
	(	47.26	,	5221	)
	(	47.59	,	5345	)
	(	47.92	,	5409	)
	(	48.25	,	5415	)
	(	48.58	,	5546	)
	(	48.91	,	5572	)
	(	49.24	,	5604	)
	(	49.57	,	5732	)
	(	50	,	5921	)
	(	50.33	,	5926	)
	(	50.66	,	5953	)
	(	50.99	,	6006	)
	(	51.32	,	6076	)
	(	51.65	,	6093	)
	(	51.98	,	6136	)
	(	52.31	,	6176	)
	(	52.64	,	6219	)
	(	52.97	,	6325	)
	(	53.3	,	6372	)
	(	53.63	,	6625	)
	(	53.96	,	6693	)
	(	54.29	,	6793	)
	(	54.62	,	6909	)
	(	54.95	,	6962	)
	(	55.28	,	7103	)
	(	55.61	,	7161	)
	(	55.94	,	7200	)
	(	56.27	,	7260	)
	(	56.6	,	7386	)
	(	56.93	,	7386	)
	(	57.26	,	7387	)
	(	57.59	,	7405	)
	(	57.92	,	7474	)
	(	58.25	,	7533	)
	(	58.58	,	7610	)
	(	58.91	,	7662	)
	(	59.24	,	7722	)
	(	59.57	,	7765	)
	(	60	,	7844	)
	(	60.33	,	7874	)
	(	60.66	,	8034	)
	(	60.99	,	8083	)
	(	61.32	,	8208	)
	(	61.65	,	8256	)
	(	61.98	,	8272	)
	(	62.31	,	8334	)
	(	62.64	,	8449	)
	(	62.97	,	8510	)
	(	63.3	,	8584	)
	(	63.63	,	8589	)
	(	63.96	,	8779	)
	(	64.29	,	8797	)
	(	64.62	,	8868	)
	(	64.95	,	8891	)
	(	65.28	,	8920	)
	(	65.61	,	8960	)
	(	65.94	,	9070	)
	(	66.27	,	9233	)
	(	66.6	,	9260	)
	(	66.93	,	9310	)
	(	67.26	,	9320	)
	(	67.59	,	9442	)
	(	67.92	,	9473	)
	(	68.25	,	9552	)
	(	68.58	,	9716	)
	(	68.91	,	9771	)
	(	69.24	,	9775	)
	(	69.57	,	10121	)
	(	70	,	10142	)
	(	70.33	,	10200	)
	(	70.66	,	10282	)
	(	70.99	,	10307	)
	(	71.32	,	10311	)
	(	71.65	,	10403	)
	(	71.98	,	10492	)
	(	72.31	,	10593	)
	(	72.64	,	10656	)
	(	72.97	,	10683	)
	(	73.3	,	10735	)
	(	73.63	,	11067	)
	(	73.96	,	11075	)
	(	74.29	,	11174	)
	(	74.62	,	11268	)
	(	74.95	,	11356	)
	(	75.28	,	11364	)
	(	75.61	,	11481	)
	(	75.94	,	11563	)
	(	76.27	,	11680	)
	(	76.6	,	11784	)
	(	76.93	,	11946	)
	(	77.26	,	11967	)
	(	77.59	,	12160	)
	(	77.92	,	12167	)
	(	78.25	,	12194	)
	(	78.58	,	12212	)
	(	78.91	,	12240	)
	(	79.24	,	12272	)
	(	79.57	,	12387	)
	(	80	,	12642	)
	(	80.33	,	12836	)
	(	80.66	,	13052	)
	(	80.99	,	13274	)
	(	81.32	,	13468	)
	(	81.65	,	13552	)
	(	81.98	,	13765	)
	(	82.31	,	13863	)
	(	82.64	,	13916	)
	(	82.97	,	14072	)
	(	83.3	,	14117	)
	(	83.63	,	14323	)
	(	83.96	,	14326	)
	(	84.29	,	14335	)
	(	84.62	,	14523	)
	(	84.95	,	14560	)
	(	85.28	,	15511	)
	(	85.61	,	15641	)
	(	85.94	,	15764	)
	(	86.27	,	16069	)
	(	86.6	,	16275	)
	(	86.93	,	16297	)
	(	87.26	,	16336	)
	(	87.59	,	16444	)
	(	87.92	,	16489	)
	(	88.25	,	16904	)
	(	88.58	,	17209	)
	(	88.91	,	17442	)
	(	89.24	,	17657	)
	(	89.57	,	17854	)
	(	90	,	18269	)
	(	90.33	,	18426	)
	(	90.66	,	18443	)
	(	90.99	,	19048	)
	(	91.32	,	19298	)
	(	91.65	,	19342	)
	(	91.98	,	19657	)
	(	92.31	,	19846	)
	(	92.64	,	20148	)
	(	92.97	,	20655	)
	(	93.3	,	21559	)
	(	93.63	,	22745	)
	(	93.96	,	22787	)
	(	94.29	,	23070	)
	(	94.62	,	23558	)
	(	94.95	,	24417	)
	(	95.28	,	24466	)
	(	95.61	,	24545	)
	(	95.94	,	24691	)
	(	96.27	,	25949	)
	(	96.6	,	26557	)
	(	96.93	,	26872	)
	(	97.26	,	27573	)
	(	97.59	,	28157	)
	(	97.92	,	30632	)
	(	98.25	,	30657	)
	(	98.58	,	33040	)
	(	98.91	,	33041	)
	(	99.24	,	34001	)
	(	99.57	,	35380	)
		};
		\addlegendentry{\textbf{B\&C}(WOC)}
	
	\end{axis}
	\end{tikzpicture}
	\caption{\small Graph of Number of Variables versus size $n$ for \textbf{B\&C}(WOC) and \textbf{B\&P\&C}(MP).} \label{performanceProfiles1}
\end{minipage}
\begin{minipage}[c]{0.45\textwidth}
	\begin{tikzpicture}[scale=0.7,font=\footnotesize]
	\begin{axis}[axis x line=bottom,  axis y line=left,
	xlabel=$n$,
	ylabel=Memory (Mb),
	legend style={legend pos=north west,font=\scriptsize}]

	\addplot[orange,semithick,dash pattern=on 4pt off 2pt] plot coordinates {		

(	20	,	35	)
(	20	,	35	)
(	20	,	35	)
(	20	,	35	)
(	20	,	35	)
(	20	,	35	)
(	20	,	35	)
(	20	,	35	)
(	20	,	35	)
(	20	,	35	)
(	20	,	35	)
(	20	,	35	)
(	20	,	35	)
(	20	,	35	)
(	20	,	35	)
(	20	,	35	)
(	20	,	35	)
(	20	,	35	)
(	20	,	35	)
(	20	,	35	)
(	20	,	35	)
(	20	,	35	)
(	20	,	35	)
(	20	,	35	)
(	20	,	35	)
(	20	,	35	)
(	20	,	35	)
(	20	,	35	)
(	20	,	35	)
(	20	,	35	)
(	30	,	101	)
(	30	,	101	)
(	30	,	101	)
(	30	,	101	)
(	30	,	101	)
(	30	,	101	)
(	30	,	101	)
(	30	,	101	)
(	30	,	101	)
(	30	,	101	)
(	30	,	101	)
(	30	,	101	)
(	30	,	101	)
(	30	,	101	)
(	30	,	101	)
(	30	,	101	)
(	30	,	101	)
(	30	,	101	)
(	30	,	101	)
(	30	,	101	)
(	30	,	101	)
(	30	,	101	)
(	30	,	101	)
(	30	,	101	)
(	30	,	101	)
(	30	,	101	)
(	30	,	101	)
(	30	,	101	)
(	30	,	101	)
(	30	,	101	)
(	40	,	235	)
(	40	,	235	)
(	40	,	235	)
(	40	,	235	)
(	40	,	235	)
(	40	,	235	)
(	40	,	235	)
(	40	,	235	)
(	40	,	235	)
(	40	,	235	)
(	40	,	235	)
(	40	,	235	)
(	40	,	235	)
(	40	,	235	)
(	40	,	235	)
(	40	,	235	)
(	40	,	235	)
(	40	,	235	)
(	40	,	235	)
(	40	,	235	)
(	40	,	235	)
(	40	,	235	)
(	40	,	235	)
(	40	,	235	)
(	40	,	235	)
(	40	,	235	)
(	40	,	235	)
(	40	,	235	)
(	40	,	235	)
(	40	,	235	)
(	50	,	451	)
(	50	,	451	)
(	50	,	451	)
(	50	,	451	)
(	50	,	451	)
(	50	,	451	)
(	50	,	451	)
(	50	,	451	)
(	50	,	451	)
(	50	,	451	)
(	50	,	451	)
(	50	,	451	)
(	50	,	451	)
(	50	,	451	)
(	50	,	451	)
(	50	,	451	)
(	50	,	451	)
(	50	,	451	)
(	50	,	451	)
(	50	,	451	)
(	50	,	451	)
(	50	,	451	)
(	50	,	451	)
(	50	,	451	)
(	50	,	451	)
(	50	,	451	)
(	50	,	451	)
(	50	,	451	)
(	50	,	451	)
(	50	,	451	)
(	60	,	764	)
(	60	,	764	)
(	60	,	764	)
(	60	,	764	)
(	60	,	764	)
(	60	,	764	)
(	60	,	764	)
(	60	,	764	)
(	60	,	764	)
(	60	,	764	)
(	60	,	764	)
(	60	,	764	)
(	60	,	764	)
(	60	,	764	)
(	60	,	764	)
(	60	,	764	)
(	60	,	764	)
(	60	,	764	)
(	60	,	764	)
(	60	,	764	)
(	60	,	764	)
(	60	,	764	)
(	60	,	764	)
(	60	,	764	)
(	60	,	764	)
(	60	,	764	)
(	60	,	764	)
(	60	,	764	)
(	60	,	764	)
(	60	,	764	)
(	70	,	1214	)
(	70	,	1214	)
(	70	,	1214	)
(	70	,	1214	)
(	70	,	1214	)
(	70	,	1214	)
(	70	,	1214	)
(	70	,	1214	)
(	70	,	1214	)
(	70	,	1214	)
(	70	,	1214	)
(	70	,	1214	)
(	70	,	1214	)
(	70	,	1214	)
(	70	,	1214	)
(	70	,	1214	)
(	70	,	1214	)
(	70	,	1214	)
(	70	,	1214	)
(	70	,	1214	)
(	70	,	1214	)
(	70	,	1214	)
(	70	,	1214	)
(	70	,	1214	)
(	70	,	1214	)
(	70	,	1214	)
(	70	,	1214	)
(	70	,	1214	)
(	70	,	1214	)
(	70	,	1214	)
(	80	,	1830	)
(	80	,	1830	)
(	80	,	1830	)
(	80	,	1830	)
(	80	,	1830	)
(	80	,	1830	)
(	80	,	1830	)
(	80	,	1830	)
(	80	,	1830	)
(	80	,	1830	)
(	80	,	1830	)
(	80	,	1830	)
(	80	,	1830	)
(	80	,	1830	)
(	80	,	1830	)
(	80	,	1830	)
(	80	,	1830	)
(	80	,	1830	)
(	80	,	1830	)
(	80	,	1830	)
(	80	,	1830	)
(	80	,	1830	)
(	80	,	1830	)
(	80	,	1830	)
(	80	,	1830	)
(	80	,	1830	)
(	80	,	1830	)
(	80	,	1830	)
(	80	,	1830	)
(	80	,	1830	)
(	90	,	2561	)
(	90	,	2561	)
(	90	,	2561	)
(	90	,	2561	)
(	90	,	2561	)
(	90	,	2561	)
(	90	,	2561	)
(	90	,	2561	)
(	90	,	2561	)
(	90	,	2561	)
(	90	,	2561	)
(	90	,	2561	)
(	90	,	2561	)
(	90	,	2561	)
(	90	,	2561	)
(	90	,	2561	)
(	90	,	2561	)
(	90	,	2561	)
(	90	,	2561	)
(	90	,	2561	)
(	90	,	2561	)
(	90	,	2561	)
(	90	,	2561	)
(	90	,	2561	)
(	90	,	2561	)
(	90	,	2561	)
(	90	,	2561	)
(	90	,	2561	)
(	90	,	2561	)
(	90	,	2561	)
	
	};
	\addlegendentry{\textbf{B\&C}(WOC)}

	\addplot[blue,semithick,dash pattern=on 4pt off 2pt] plot coordinates {		
	
(	20	,	3	)
(	20	,	3	)
(	20	,	4	)
(	20	,	4	)
(	20	,	4	)
(	20	,	4	)
(	20	,	4	)
(	20	,	4	)
(	20	,	4	)
(	20	,	4	)
(	20	,	4	)
(	20	,	4	)
(	20	,	4	)
(	20	,	5	)
(	20	,	5	)
(	20	,	5	)
(	20	,	6	)
(	20	,	6	)
(	20	,	6	)
(	20	,	6	)
(	20	,	6	)
(	20	,	7	)
(	20	,	7	)
(	20	,	7	)
(	20	,	7	)
(	20	,	7	)
(	20	,	7	)
(	20	,	7	)
(	20	,	8	)
(	20	,	8	)
(	20	,	8	)
(	20.33	,	9	)
(	20.66	,	10	)
(	20.99	,	11	)
(	21.32	,	12	)
(	21.65	,	12	)
(	21.98	,	12	)
(	22.31	,	12	)
(	22.64	,	12	)
(	22.97	,	12	)
(	23.3	,	13	)
(	23.63	,	13	)
(	23.96	,	14	)
(	24.29	,	15	)
(	24.62	,	15	)
(	24.95	,	15	)
(	25.28	,	15	)
(	25.61	,	15	)
(	25.94	,	15	)
(	26.27	,	16	)
(	26.6	,	18	)
(	26.93	,	19	)
(	27.26	,	20	)
(	27.59	,	20	)
(	27.92	,	21	)
(	28.25	,	21	)
(	28.58	,	21	)
(	28.91	,	22	)
(	29.24	,	22	)
(	29.57	,	22	)
(	30	,	22	)
(	30.33	,	23	)
(	30.66	,	23	)
(	30.99	,	24	)
(	31.32	,	25	)
(	31.65	,	26	)
(	31.98	,	27	)
(	32.31	,	28	)
(	32.64	,	28	)
(	32.97	,	31	)
(	33.3	,	32	)
(	33.63	,	32	)
(	33.96	,	33	)
(	34.29	,	33	)
(	34.62	,	33	)
(	34.95	,	33	)
(	35.28	,	34	)
(	35.61	,	34	)
(	35.94	,	35	)
(	36.27	,	35	)
(	36.6	,	35	)
(	36.93	,	35	)
(	37.26	,	36	)
(	37.59	,	36	)
(	37.92	,	39	)
(	38.25	,	41	)
(	38.58	,	41	)
(	38.91	,	42	)
(	39.24	,	43	)
(	39.57	,	44	)
(	40	,	44	)
(	40.33	,	47	)
(	40.66	,	49	)
(	40.99	,	49	)
(	41.32	,	49	)
(	41.65	,	49	)
(	41.98	,	50	)
(	42.31	,	51	)
(	42.64	,	54	)
(	42.97	,	58	)
(	43.3	,	59	)
(	43.63	,	59	)
(	43.96	,	61	)
(	44.29	,	62	)
(	44.62	,	62	)
(	44.95	,	63	)
(	45.28	,	65	)
(	45.61	,	65	)
(	45.94	,	69	)
(	46.27	,	71	)
(	46.6	,	72	)
(	46.93	,	73	)
(	47.26	,	76	)
(	47.59	,	79	)
(	47.92	,	80	)
(	48.25	,	80	)
(	48.58	,	81	)
(	48.91	,	81	)
(	49.24	,	84	)
(	49.57	,	86	)
(	50	,	87	)
(	50.33	,	87	)
(	50.66	,	88	)
(	50.99	,	88	)
(	51.32	,	89	)
(	51.65	,	91	)
(	51.98	,	91	)
(	52.31	,	91	)
(	52.64	,	93	)
(	52.97	,	95	)
(	53.3	,	96	)
(	53.63	,	97	)
(	53.96	,	98	)
(	54.29	,	104	)
(	54.62	,	106	)
(	54.95	,	106	)
(	55.28	,	107	)
(	55.61	,	111	)
(	55.94	,	115	)
(	56.27	,	115	)
(	56.6	,	116	)
(	56.93	,	119	)
(	57.26	,	119	)
(	57.59	,	119	)
(	57.92	,	119	)
(	58.25	,	122	)
(	58.58	,	126	)
(	58.91	,	126	)
(	59.24	,	126	)
(	59.57	,	127	)
(	60	,	130	)
(	60.33	,	131	)
(	60.66	,	132	)
(	60.99	,	134	)
(	61.32	,	135	)
(	61.65	,	136	)
(	61.98	,	138	)
(	62.31	,	139	)
(	62.64	,	145	)
(	62.97	,	146	)
(	63.3	,	146	)
(	63.63	,	147	)
(	63.96	,	151	)
(	64.29	,	155	)
(	64.62	,	155	)
(	64.95	,	155	)
(	65.28	,	156	)
(	65.61	,	157	)
(	65.94	,	157	)
(	66.27	,	159	)
(	66.6	,	167	)
(	66.93	,	168	)
(	67.26	,	168	)
(	67.59	,	169	)
(	67.92	,	171	)
(	68.25	,	173	)
(	68.58	,	173	)
(	68.91	,	174	)
(	69.24	,	178	)
(	69.57	,	178	)
(	70	,	178	)
(	70.33	,	179	)
(	70.66	,	180	)
(	70.99	,	185	)
(	71.32	,	185	)
(	71.65	,	190	)
(	71.98	,	192	)
(	72.31	,	195	)
(	72.64	,	198	)
(	72.97	,	199	)
(	73.3	,	201	)
(	73.63	,	204	)
(	73.96	,	207	)
(	74.29	,	211	)
(	74.62	,	212	)
(	74.95	,	212	)
(	75.28	,	213	)
(	75.61	,	216	)
(	75.94	,	221	)
(	76.27	,	224	)
(	76.6	,	225	)
(	76.93	,	225	)
(	77.26	,	229	)
(	77.59	,	230	)
(	77.92	,	232	)
(	78.25	,	235	)
(	78.58	,	238	)
(	78.91	,	241	)
(	79.24	,	252	)
(	79.57	,	254	)
(	80	,	255	)
(	80.33	,	260	)
(	80.66	,	261	)
(	80.99	,	267	)
(	81.32	,	270	)
(	81.65	,	271	)
(	81.98	,	280	)
(	82.31	,	283	)
(	82.64	,	285	)
(	82.97	,	288	)
(	83.3	,	288	)
(	83.63	,	297	)
(	83.96	,	299	)
(	84.29	,	302	)
(	84.62	,	306	)
(	84.95	,	308	)
(	85.28	,	329	)
(	85.61	,	332	)
(	85.94	,	334	)
(	86.27	,	347	)
(	86.6	,	356	)
(	86.93	,	365	)
(	87.26	,	367	)
(	87.59	,	367	)
(	87.92	,	370	)
(	88.25	,	372	)
(	88.58	,	376	)
(	88.91	,	377	)
(	89.24	,	382	)
(	89.57	,	387	)
(	90	,	409	)
(	90.33	,	423	)
(	90.66	,	424	)
(	90.99	,	429	)
(	91.32	,	430	)
(	91.65	,	438	)
(	91.98	,	440	)
(	92.31	,	446	)
(	92.64	,	465	)
(	92.97	,	468	)
(	93.3	,	514	)
(	93.63	,	553	)
(	93.96	,	559	)
(	94.29	,	578	)
(	94.62	,	581	)
(	94.95	,	611	)
(	95.28	,	614	)
(	95.61	,	618	)
(	95.94	,	626	)
(	96.27	,	626	)
(	96.6	,	655	)
(	96.93	,	685	)
(	97.26	,	688	)
(	97.59	,	696	)
(	97.92	,	798	)
(	98.25	,	821	)
(	98.58	,	888	)
(	98.91	,	928	)
(	99.24	,	959	)
(   100	,	1002	)

	};
	\addlegendentry{\textbf{B\&P\&C}(MP)}

	\end{axis}
	\end{tikzpicture}
	\caption{\small Graph of Memory usage (Mb) versus size  $n$ for \textbf{B\&C}(WOC) and \textbf{B\&P\&C}(MP).} \label{performanceProfiles2}
\end{minipage}
\end{figure}

\subsection{\BPC(MP) vs B\&C (WOC)\label{section:4.3}}

We now compare the \BPC\;  implementation of $MP$ with the B\&C implementation of $WOC$. The former is a branch-price-and-cut algorithm and the latter a branch-and-cut.

\begin{table}[!ht]
	\begin{center}
	  \scriptsize
		\begin{tabular}{ll|rrr|rrr}
			&	& \multicolumn{3}{c|}{$n=20$} & \multicolumn{3}{c}{$n=30$}  \\
			&	& $p=5$ & $p=6$ & $p=10$ & $p=7$ & $p=10$ & $p=15$ \\ \hline	
			\textbf{B\&C} &   $Time(s)$&16.54&11.50&4.48&1807.41&1578.21 &131.89  \\
						
			(WOC) & $|Vars|$&6054&5706&4211&20643&18245&13952\\
			
			& $|Nodes|$&1215&440&38&198424&305595&19197  \\
			
			& $|Cuts|$&1537&1249&689&4789&3056 &2519  \\

                   & $\# unsolved (T/M)$&0/0&0/0&0/0&1/1&1/0 &0/0  \\

                   & $Gap(\%)$&0.00&0.00&0.00&0.63&0.12 &0.00  \\\hline

			\textbf{\BPC} & $Time(s)$&3425.38&2220.55&159.35&6011.22&6298.75&4849.08\\	
				
			(MP) & $|Vars|$&13477&9054&4451&9493&11427 &11464  \\
			
			& $|Nodes|$&24&21&54&2&15 &26  \\
			
			& $|Cuts|$&1289&1028&543&3945&2520 &2162  \\

                   & $\# unsolved$&2&1&0&8&8 &6  \\

                   & $Gap(\%)$&0.45&0.14&0.00&1.38&1.18 &0.90  \\\hline

			&	& \multicolumn{3}{c|}{$n=40$} & \multicolumn{3}{c}{$n=50$}  \\
			&	& $p=10$ & $p=13$ & $p=20$ & $p=12$ & $p=16$ & $p=25$ \\ \hline				 \textbf{B\&C} &   $Time(s)$&7050.93&7061.36&6202.85&7200.00&7116.54 &6575.59  \\
						
			(WOC) & $|Vars|$&48065&43664&32820&94784&85630 &63776  \\
			
			& $|Nodes|$&602685&628962&605812&270959&284028 &355560  \\
			
			& $|Cuts|$&7939&6559&4727&12579&10423 &10131  \\

                   & $\# unsolved (T/M)$&7/3&8/2&8/0&10/0&9/1 &9/0  \\

                   & $Gap(\%)$&1.65&2.30&2.45&0.90&1.13 &1.32  \\\hline

			\textbf{\BPC} & $Time(s)$&7200.00&6572.81&6709.53&7200.00&7200.00&6697.44\\	
				
			(MP) & $|Vars|$&10278&10170&12096&14971&13627 &13725  \\
			
			& $|Nodes|$&1&1&2&55&1 &1  \\
			
			& $|Cuts|$&5436&5073&4734&7807&7907 &9183  \\

                   & $\# unsolved$&10&9&9&10&10 &9  \\

                   & $Gap(\%)$&5.54&4.36&3.72&3.96&5.06 &3.87  \\\hline
	&	& \multicolumn{3}{c|}{$n=60$} & \multicolumn{3}{c}{$n=70$}  \\
			&	& $p=15$ & $p=20$ & $p=30$ & $p=17$ & $p=23$ & $p=35$ \\ \hline	
			\textbf{B\&C} &   $Time(s)$&2768.88&3306.54&6707.38&1842.00&2119.13&2474.98\\
						
			(WOC) & $|Vars|$&161807&144983&109804&259406&231680 &173955  \\
			
			& $|Nodes|$&1&20330&85723&1&1 &835  \\
			
			& $|Cuts|$&18081&19887&15676&16115&23603 &19238  \\

                   & $\# unsolved (T/M)$&0/8&2/8&8/2&0/10&0/10 &2/8  \\

                   & $Gap(\%)$&2.74&2.86&1.78&5.67&5.77 &7.12  \\\hline

			\textbf{\BPC} & $Time(s)$&7200.00&6864.94&7200.00&7200.00&7200.00&7200.00\\	
				
			(MP) & $|Vars|$&21094&16077&17634&31949&32345&22175  \\
			
			& $|Cuts|$&12999&16061&13342&14722&20532 &19240  \\
			
			& $|Cuts|$&8917&13099&12406&5252&2058 &17238  \\

                   & $\# unsolved$&10&9&10&10&10 &10  \\

                   & $Gap(\%)$&7.04&6.83&7.48&6.95&8.14 &8.35  \\\hline
				&	& \multicolumn{3}{c|}{$n=80$} & \multicolumn{3}{c}{$n=90$}  \\
			&	& $p=20$ & $p=26$ & $p=40$ & $p=22$ & $p=30$ & $p=45$ \\ \hline	
			\textbf{B\&C} &   $Time(s)$&2902.00&2886.25&3428.13&5999.16&5214.89 &6243.49  \\
						
			(WOC) & $|Vars|$&383199&346926&259186&549561&488316 &368560  \\
			
			& $|Nodes|$&1&1&1&1&1 &1  \\
			
			& $|Cuts|$&27129&25187&12406&46216&32406 &12157  \\

                   & $\# unsolved (T/M)$&0/10&0/10&0/10&0/10&0/10 &7/3  \\

                   & $Gap(\%)$&6.50&5.28&3.26&6.37&4.42 &4.06  \\\hline

			\textbf{\BPC} & $Time(s)$&7200.00&7200.00&7200.00&7200.00&7200.00&7200.00\\	
				
			(MP) & $|Vars|$&41971&34634&17640&41239&36230 &23826  \\
			
			& $|Nodes|$&384&1196&1&294&625 &314  \\
			
			& $|Cuts|$ &27360&24059&13884&43721&31810&11061 \\

                   & $\# unsolved$&10&10&10&10&10 &10  \\

                   & $Gap(\%)$&8.33&7.09&3.16&8.37&6.14 &4.56  \\\hline
				&	& \multicolumn{3}{c|}{$n=100$}  \\
			&	& $p=25$ & $p=33$ & $p=50$  \\ \hline	
			\textbf{B\&C} &   $Time(s)$&--&--&--&  \\
						
			(WOC) & $|Vars|$&&&&& &  \\
			
			& $|Nodes|$&--&--&--&  \\
			
			& $|Cuts|$&--&--&--&  \\

                   & $\# unsolved (T/M)$&--&--&--&  \\

                   & $Gap(\%)$&--&--&--&  \\\hline

			\textbf{\BPC} & $Time(s)$&7200.00&7200.00&7200.00&&&\\	
				
			(MP) & $|Vars|$&40905&40552&31199&& &  \\
			
			& $|Nodes|$&319&389&68&& &  \\
			
			& $|Cuts|$&77889&54296&15408&& &  \\

                   & $\# unsolved$&10&10&10&& &  \\

                   & $Gap(\%)$&7.77&7.12&5.49&& &  \\\hline

		\end{tabular}
		\caption{Numerical results for \textbf{B\&C}(WOC) and \BPC(MP) } \label{Results20to100}
	\end{center}
\end{table}

The results are reported in Table \ref{Results20to100}. In that table, we denote by $Time(s)$ the average computational time (in seconds) required by each method to obtain an optimal solution for a given set of 10 instances defined by number of clients ($n$) and number of open facilities ($p$). We report 7200 s. in those cases where the optimal solution is not obtained in 2 hours.

With  $|Vars|$ we refer to  the average of the numbers of variables used by $MP$ or $WOC$. We also denote by $|Nodes|$ and $|Cuts|$ the average of the number of nodes explored and the average of the number of cuts used, respectively, in the corresponding methodology. The row $\#unsolved(T/M)$ in the case of \textbf{B\&C}(WOC) reports the number of unsolved instances out of the 10 in each group. It distinguishes between those instances not solved by exceeding the maximum running time ($T$) or the memory limits ($M$). Observe that in the similar row within the blocks \textbf{B\&P\&C}(MP) no distinction is shown since the memory limit is never reached and instances not solved are only due to time limitations. Finally, we also include in our report the gap at termination ($GAP(\%)$).

Analyzing further the results in Table \ref{Results20to100} we conclude that on average \textbf{B\&C}(WOC) is faster than \textbf{B\&P\&C}(MP). We could explain this behavior because of the professional implementation of Cplex to handle the branching tree and its sophisticated branching strategies that we cannot reproduce in our implementation.   On the other hand, remark the much smaller number of variables and thus, memory requirements, used by  \BPC (MP) as compared with \textbf{B\&C}(WOC). Actually, one of the most important features of our $MP$ formulation is that it needs much less number of variables than $WOC$, allowing us solving larger size instances with $MP$ that were not affordable for the original $WOC$.

We also observe that the number of required cuts for \BPC (MP) is smaller than for \textbf{B\&C}(WOC). This could be explained by the tightness of \BPC (MP) with respect to \textbf{B\&C}(WOC). After adding cuts \BPC (MP) is able to solve the problem in many of  the cases at the root node. This behavior does not occur for \textbf{B\&C}(WOC).
The number of instances solved to optimality, for small size instances up to $n=40$, is slightly better for \textbf{B\&C}(WOC). As the size increases this number is similar in both cases. Gaps at termination, after 7200 seconds, are always smaller than $8\%$ for \BPC (MP) and smaller than $7.15\%$ for \textbf{B\&C}(WOC), being the later slightly better. For the larger instances of $n=80,90$ gaps are  similar. Finally, \textbf{B\&C}(WOC) was not able to handle any instance with $n=100$ (reporting out of memory flags) whereas \BPC (MP) reports the same performance than for the  previous sizes.

To conclude, despite the promising better root node gap, and the features developed for \BPC(MP), such as the stabilization, hurry pricer, cuts, etc., the overall performance of this framework in solving DOMP is not systematically better than the branch-and-cut formulation \textbf{B\&C}(WOC). In small to medium size instances \textbf{B\&C}(WOC) is faster and achieves slightly smaller gaps. Nevertheless, in larger size instances performance is similar.  Moreover, as expected, we were able to handle the largest considered sizes $(n=100)$ only with \BPC (MP) and not with \textbf{B\&C}(WOC).

\clearpage
\section{Conclusions\label{section:5}}
This paper presents a first branch-price-and-cut, \BPC(MP), algorithm for solving DOMP. This approach is based on an extended formulation using an exponential number of variables coming from a set partitioning model. Elements in the partitions are couples containing information about a client and its sorted position in the sorted sequence of allocation costs. To address the solution of this formulation we develop a column generation algorithm and we prove that the pricing routine is polynomially solvable by a dynamic programming algorithm. We embed the column generation algorithm within a brand-and-price framework. Furthermore, we adapt preprocessing and incorporate families of valid inequalities that improve its performance. Extensive computational results compare the performance of our \BPC(MP) against the most recent algorithm in the literature for DOMP, \textbf{B\&C}(WOC), showing that for the largest considered instances \BPC(MP) performs better and it requires less memory to upload and run the models.

\section*{Acknowledgements}

This research has been partially supported by Spanish Ministry of Econom{\'\i}a and  Competitividad/FEDER grants number MTM2016-74983-C02-01. The research of the second and third authors was partially  supported by the Interuniversity Attraction Poles Programme initiated by the Belgian Science Policy Office. We thank the SCIP team (\cite{GamrathFischerGallyetal}) for the helpful advices.

\bibliographystyle{plainnat}
\bibliography{mybibDP}

\newpage

\appendix
\section{Appendix}

\subsection{GRASP\label{c3:ss35}}
In the following we report the detailed implementation of the functions \textit{ConstructGreedySolution} and \textit{LocalSearch} in the GRASP algorithm \ref{c3:al GRASP}.

\begin{algorithm}[H]
\begin{algorithmic}[1]
\small
\STATE Input($|J|=q\le p$);
\WHILE {$|J|< p$}
\STATE $j^*=\emptyset$;
\STATE $value=M$;
\FOR {$j\in \bar J$}
\IF {$z(J\cup \{j\})<value$}
\STATE $value=z(J\cup \{j\})$;
\STATE$j^*=\{j\}$;
\ENDIF
\ENDFOR
\STATE$J=J\cup\{j^*\}$;
\ENDWHILE
\end{algorithmic}
\caption{{ConstructGreedySolution}.\label{c3:al GRASP}}
\end{algorithm}

\begin{algorithm}[H]
\begin{algorithmic}[1]
\small
\STATE Input($|J|=p$);
\STATE $\bar z= z(J)$;
\FOR {$n_2$ iterations}
\FOR {$j_1\in J$}
\FOR {$j_2\in \bar J$}
\IF {$z((J\setminus\{j_1\})\cup \{j_2\})<\bar z$}
\STATE $\bar z = z((J\setminus\{j_1\})\cup \{j_2\})$
\STATE$J=(J\setminus\{j_1\})\cup \{j_2\}$
\ENDIF
\ENDFOR
\ENDFOR
\ENDFOR
\end{algorithmic}
\caption{{LocalSearch(Solution)}.\label{c3:al GRASP}}
\end{algorithm}

\subsection{Handling cuts within the Hurry pricer\label{ap:hp}}
The following algorithms try to avoid useless calculations in Algorithm \ref{hurryPricer} while we handle the $\zeta$ values (dual multipliers of the cuts). The idea is that, because the cuts are relatively rare, the $\zeta$ are often equal to zero. For example, in one of our experiments, we activated only 58 cuts among a maximum of 64 000, solving a $n=40$ instance.

We need to save the index for each new cut added. We note $ListOfBiIndex$ the sorted 3-tuple list of index ($c_i$,$c_j$,$c_k$) for each cut $c$. It is sorted by $k$ and then according to the costs. This list is updated after each separator has been called. Then, we can have several pricings using the same $ListOfBiIndex$, while the duals $\zeta$ are changing at each iteration.

We note $VVP$ the vector of vectors of pairs such that it saving the increasing and decreasing sums of $\zeta$. The increasing sums are accessible by $first$ and the decreasing sums by $second$. First, we fill out a data structure $VVP$ with the right sum for each individual tuple of index from $ListOfBiIndex$ and for all $k=1..n$ (cf. Algorithm \ref{FastSumsDualCutsValues}). Second, we finish to fill out $VVP$ for the other index with the existing source.

\begin{algorithm}
\caption{FastSumsDualCutsValues}
\begin{algorithmic}[1]
 \STATE Take the list of tuples $ListOfBiIndex$ from the last call of the Separator ;
 \STATE Take the duals $\zeta$ from the last restricted MP resolution ;
 \STATE $kPrevious = 0$ ; Initialize all $VVP$ with 0 ;
 \FOR{the 3-tuple ($(index = (i,j))$,$k$) in the normal order of $ListOfBiIndex$}
 	\IF{$kPrevious \neq k$}
 		\STATE $Previous = 0$ ;
 		\STATE $kPrevious = k$ ;		
 	\ENDIF
	\STATE $VVP[index][k].first = Previous + \zeta_{ij}^k$ ;
	\STATE $Previous = VVP[index][k].first$ ;
 \ENDFOR
 \STATE $kPrevious = 0$ ;
  \FOR{the 3-tuple ($(index_r = (i_r,j_r))$,$k_r$) in the reverse order of $ListOfBiIndex$}
  	\IF{$kPrevious \neq k_r$}
  		\STATE $Previous = 0$ ;
  		\STATE $kPrevious = k_r$ ;		
  	\ENDIF
 	\STATE $VVP[index_r][k_r].second = Previous + \zeta_{i_rj_r}^k$ ;
 	\STATE $Previous = VVP[index_r][k_r].second$ ;
  \ENDFOR
\STATE \RETURN $VVP$;
\end{algorithmic}\label{FastSumsDualCutsValues}
\end{algorithm}

This first algorithm will fill out the structure $VVP$ with the sums of the dual $\zeta$. $first$ gives the dimension saving the sums in the increasing order, in order to have directly the value $ \sum^n_{ i'=1} \sum^n_{\substack{ j'=1: \\ C_{ i' j'} \leq C_{i_lj}}} \zeta_{ i' j'}^k $  and $second$ determines the reverse order to obtain $ \sum^n_{ i'=1} \sum^n_{\substack{ j'=1: \\ C_{ i' j'} \geq C_{i_lj}}}  \zeta_{ i' j'}^{k-1}$ faster. The Algorithm \ref{SpreadSums} takes for input the $VPP$ updated from the last call of Algorithm \ref{FastSumsDualCutsValues}. It will copy the non-zero sums (so from the index for those we added a cut) to the other cells such that the value of the current cell (so with an "non-cut index") is equal, for the same $k$, to the last sum in the increasing or decreasing order (resp. for $first$ and $second$ dimensions).

\begin{algorithm}
\caption{SpreadSums}
\begin{algorithmic}[1]
 \FOR{$k=1..n$}
 	\STATE $Current = 0$ ;  $Current_r = 0$ ;
 	 \FOR{$index=1..n^2$}
 	 	\IF{$VVL[index][k].first \neq 0$}
 	 		\STATE $Current = VVL[index][k].first$ ;
 	 	\ELSE
 	 		\STATE $VVL[index][k].first = Current$ ;
 	 	\ENDIF
 	 	\STATE $index_r = 1 + n^2 - index$ ;
  	 	\IF{$VVL[index_r][k].second \neq 0$}
  	 		\STATE $Current_r = VVL[index_r][k].second$ ;
  	 	\ELSE
  	 		\STATE $VVL[index_r][k].second = Current_r$ ;
  	 	\ENDIF	 	
 	 \ENDFOR
 \ENDFOR
\STATE \RETURN $VVP$;
\end{algorithmic}\label{SpreadSums}
\end{algorithm}

We can now replace the time consuming instruction of the Algorithm \ref{hurryPricer}:

 ``$d_{i_lj}^k = d_{i_lj}^k + \sum^n_{ i'=1} \sum^n_{\substack{ j'=1: \\ C_{ i' j'} \leq C_{i_lj}}} \zeta_{ i' j'}^k + \sum^n_{ i'=1} \sum^n_{\substack{ j'=1: \\ C_{ i' j'} \geq C_{i_lj}}}  \zeta_{ i' j'}^{k-1}$ ; "

with the following instruction :

`` $d_{i_lj}^k = d_{i_lj}^k + VVP[index=(i,j)][k].first + VVP[index=(i,j)][k-1]].second$ ; "

\end{document}